\newtheorem*{rep@theorem}{\rep@title}
\newcommand{\newreptheorem}[2]{%
\newenvironment{rep#1}[1]{%
 \def\rep@title{#2 \ref{##1}}%
 \begin{rep@theorem}}%
 {\end{rep@theorem}}}
\def\namedlabel#1#2{\begingroup
	#2%
	\def\@currentlabel{#2}%
	\phantomsection\label{#1}\endgroup
}
\newtheorem{thm}{Theorem}[section]
\newtheorem{prop}[thm]{Proposition}
\newtheorem{lemma}[thm]{Lemma}
\newtheorem{corol}[thm]{Corollary}
\newtheorem{case}{Case}
\theoremstyle{definition}
\title{Remarks on the countable case of the Unfriendly Partition Problem}
\author{Leandro Aurichi$^*$ and Lucas Real\footnote{Institute of Mathematics and Computer Sciences, University of S\~{a}o Paulo, S\~{a}o Paulo, Brazil}}
\date{November 2024}
\begin{document}
\maketitle
\begin{abstract}
    The Unfriendly Partition Problem asks whether it is possible to split the vertex set of a graph $G$ into two parts so that every vertex has at least as many neighbors in the other part than on its own. Despite the uncountable counterexamples provided by Milner and Shelah in 1990, this question still has no solution for graphs on countably many vertices. Under this hypothesis, our main result claims that such a bipartition exists if the rays of $G$ do not pass through infinitely many vertices of finite degree and infinitely many vertices of infinite degree simultaneously. In particular, for the class of countable graphs, we generalize previous results due to Aharoni, Milner and Prikry and due to Bruhn, Diestel, Georgakopolous and Sprüssel.      
\end{abstract}

\section{Introduction}

\paragraph{}
In this paper, a \textbf{coloring} for a graph $G$ means a bipartition of its vertex set given by a map $c:V(G) \to 2$, where $2$ is understood as the ordinal $2:=\{0,1\}$. We say that $c$ is \textbf{unfriendly} in a vertex $v\in V(G)$ if $|\{u\in N(v):c(u) \neq c(v)\}|\geq |\{u\in N(v) : c(u) = c(v)\}|$, in which $N(v)$ denotes the neighborhood of $v$. If $A$ is a vertex set or a subgraph of $G$, we also denote its neighborhood $\{u \in V(G)\setminus A : u\in N(v) \text{ for some vertex }v\in A\}$ by $N_G(A)$ (or simply by $N(A)$ if $G$ can be identified from the context). When $A\subseteq V(G)$ is a vertex set, the subgraph that it induces in $G$ shall be denoted by $G[A]$.

In the unpublished paper \cite{cowanemerson}, Cowan and Emerson conjectured that every graph admits an \textbf{unfriendly partition}, namely, a coloring which is unfriendly in every vertex. This was disproved in general by Milner and Shelah in \cite{shelah}, where these author presented a family of uncountable graphs that cannot be colored this way. However, stating the \textbf{Unfriendly Partition Problem}, it is still not know if every countable graph admits an unfriendly partition. Under this hypothesis, our main result reads as follows:

\begin{thm}\label{main}
	Every countable graph without alternating rays admits an unfriendly partition.
\end{thm}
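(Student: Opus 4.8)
The plan is to reduce to the essential case, isolate how the hypothesis is used, and then build the coloring in two stages. First I would reduce to $G$ connected and infinite: a finite graph always has an unfriendly partition, obtained from any bipartition maximizing the number of edges across the cut (a vertex that is not unfriendly could be flipped, strictly increasing this number). Write $B$ for the set of vertices of infinite degree and $F:=V(G)\setminus B$, so that $G[F]$ is locally finite. Two easy remarks guide everything. Since all degrees are at most $\aleph_0$, a coloring $c$ is unfriendly in a vertex $b\in B$ if and only if $b$ has infinitely many neighbors $u$ with $c(u)\neq c(b)$ --- a one-sided condition. And flipping a single finite-degree vertex changes, for every $b\in B$, the number of oppositely colored neighbors of $b$ by at most one, so it cannot spoil an already unfriendly $b$. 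Hence the only way the finite-degree part can endanger a vertex $b\in B$ is by flipping infinitely many of its neighbors toward the color $c(b)$, and this is what we will forestall by freezing, for each $b\in B$, an infinite reservoir of oppositely colored neighbors.

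Here is where the hypothesis enters: ``$G$ has no alternating ray'' says exactly that every ray of $G$ has a tail inside $G[F]$ or a tail inside $G[B]$. I would first color $B$ greedily so that every $b\in B$ with infinitely many neighbors in $B$ ends up with infinitely many neighbors of each color inside $B$ (routine countable bookkeeping, since at every finite stage such a $b$ still has infinitely many uncolored neighbors in $B$); such vertices are then permanently unfriendly, because $B$ is never recolored afterwards. Every remaining $b\in B$ has only finitely many neighbors in $B$, hence infinitely many in $F$, and for these I would use the no-alternating-ray hypothesis to choose an infinite reservoir $R_b\subseteq N(b)\cap F$ together with a prescription ``$R_b$ has color $1-c(b)$'' so that these prescriptions are pairwise compatible, are compatible with some unfriendly coloring of $G[F]$, and leave the reservoir vertices themselves unfriendly. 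The key point is that the reservoirs cannot be forced to collide: if infinitely many vertices of $B$ each had infinitely many neighbors in a single component $C$ of $G[F]$, then weaving a path that alternates between segments of $C$ and those vertices of $B$ would produce an alternating ray of $G$, contrary to hypothesis. So each component of $G[F]$ is leaned on by only finitely many such vertices of $B$, and the allocation becomes feasible.

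The construction is then finished by coloring $G[F]$ with a compactness argument. Exhaust $G[F]$ by finite induced subgraphs $H_0\subseteq H_1\subseteq\cdots$ and, for each $n$, choose a coloring of $H_n$ that is unfriendly in every vertex of $H_n$, respecting both the already-fixed colors of the neighbors lying in $B$ and the reservoir prescriptions inside $H_n$; such a coloring exists by the finite local-maximum argument, now with fixed ``handicap'' contributions from the boundary. Passing to a pointwise limit $c$ along a convergent subsequence, any $v\in F$ has a finite neighborhood, so for all large $n$ the coloring of $H_n$ is unfriendly in $v$ with respect to all of $G$ and $c$ agrees with it on $\{v\}\cup N(v)$; hence $c$ is unfriendly in every vertex of $F$, while every $b\in B$ is unfriendly either by the greedy coloring of $B$ or because its frozen reservoir $R_b$ supplies infinitely many oppositely colored neighbors. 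Combining the two colorings yields the desired unfriendly partition. When $B$ is finite this recovers the theorem of Aharoni, Milner and Prikry, and when the ray structure is confined away from $F$ (in particular when $G$ is rayless) one recovers the result of Bruhn, Diestel, Georgakopoulos and Sprüssel.

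The step I expect to be the main obstacle is exactly the reservoir allocation of the second paragraph: beyond disjointifying the finitely many reservoirs inside each component of $G[F]$, one must place them so that the forced colors are consistent with some unfriendly coloring of that (possibly infinite, locally finite) component and keep the forced vertices themselves unfriendly. Making this quantitative --- and confirming that the absence of alternating rays is precisely what excludes the obstructing configurations --- is where the real work lies, and where the common generalization of the two earlier results is achieved.
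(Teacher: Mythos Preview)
Your key structural claim is false. Take the comb with spine $v_{0,0}v_{1,0}v_{2,0}\cdots$, a tooth $v_{i,0}v_{i,1}v_{i,2}\cdots$ at each $i$, and a vertex $b_i$ joined to every $v_{i,j}$ with $j\ge 1$. Then $B=\{b_i:i\in\mathbb{N}\}$, the locally finite part $G[F]$ is a single connected component, and every $b_i$ has infinitely many neighbors in it; yet any ray that enters tooth $i$ through $v_{i,0}$ and visits $b_i$ is trapped above $v_{i,0}$ thereafter, so every ray of $G$ meets $B$ at most twice. Thus $G$ has no alternating ray while infinitely many vertices of $B$ lean on the unique component of $G[F]$. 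Your weaving argument breaks exactly because $v_{i,0}$ is a cut vertex of $G[F]$ separating tooth $i$ from the rest: there is no way to return to the spine. The hypothesis does constrain how $B$ sits against $G[F]$, but not in the coarse per-component way you propose.

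The compactness step also has a gap you acknowledge but understate: a coloring of a finite $H_n$ that is unfriendly in every vertex \emph{and} honours prescribed reservoir colors need not exist, so your finite subproblems may be infeasible (in the comb, freezing all of tooth $i$ to a single color leaves every inner $v_{i,j}$ with two same-colored neighbors out of three). What is required is a maximality notion robust under finite perturbation, and this is where the paper diverges from your plan: it works throughout with \emph{strongly maximal} colorings (Lemma~\ref{l21}, Corollary~\ref{c22}, Lemma~\ref{niel}) and, instead of decomposing by components of $G[F]$, fixes a normal spanning tree $T$, defines a $T$-rank recording how often the degree type alternates along upward chains (well-defined precisely because there is no alternating ray, Lemma~\ref{rank}), and proves the strengthened Theorem~\ref{EnunciadoTecnico} by transfinite induction on that rank. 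In the hard case where the root has infinite degree, a second transfinite construction of ``stable'' partial colorings is carried out (Cases~\ref{case3}--\ref{case4}), and the no-alternating-ray hypothesis is used a second time to show it exhausts $V(G)$.
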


In the above setting, we recall that a \textbf{ray} in a graph $G$ is an one-way infinite path of the form $r = v_0v_1v_2\dots$. In this case, we say that $r$ \textbf{passes} through the vertices $\{v_n\}_{n\in\mathbb{N}}\subseteq V(G)$, so that it is called an \textbf{alternating} ray if it passes through infinitely many vertices of finite degree and infinitely many vertices of infinite degree as well. In particular, Theorem \ref{main} generalizes Theorem 1 in \cite{aharoni} for countable graphs, which claims that there exist unfriendly partitions for these graphs if they contain only finitely many vertices of infinite degree. The dual statement, i.e., the existence of unfriendly partitions in countable graphs containing only finitely many vertices of finite degree, also follows from Theorem \ref{main}, although a routine greedy algorithm suffices for concluding this remark (see Exercise 22 in \cite[p. 263]{diestellivro}, for instance).

In addition, Bruhn, Georgakopoulos, Diestel and Sprüssel obtained in \cite{bruhn} that every rayless graph has an unfriendly partition. In fact, they highlighted their proof also reaches the same conclusion for countable graphs not containing rays which are attached to vertices of infinite degree in the following sense: there are infinitely many disjoint paths connecting such fixed ray to these vertices. Since this forbidden class includes the rays passing through infinitely many vertices of infinite degree, Theorem \ref{main} also generalizes this latter result. Therefore, the state of art of the Unfriendly Partition Problem for countable graphs may be summarized by Theorem \ref{main} and the main result of Berger in \cite{berger}, which sets the existence of unfriendly partitions in graphs not containing subdivisions of infinite cliques. In particular, the previous literature on the conjecture does not cover the following particular instance of Theorem \ref{main}:

\begin{corol}
	Every countable graph whose each ray passes through only finitely many vertices of finite degree admits an unfriendly partition.
\end{corol}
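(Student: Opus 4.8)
The plan is to deduce this immediately from Theorem \ref{main}. Recall that, by the definition recalled just after the statement of Theorem \ref{main}, a ray is \emph{alternating} precisely when it passes through infinitely many vertices of finite degree \emph{and} through infinitely many vertices of infinite degree at the same time. So the first (and essentially only) step is the elementary observation that, if $G$ is a countable graph each of whose rays passes through only finitely many vertices of finite degree, then no ray of $G$ can meet the first of these two conditions, and hence $G$ contains no alternating ray whatsoever.

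Once this is in hand, Theorem \ref{main} applies to $G$ without any further work and produces the desired unfriendly partition, so the corollary follows. I do not expect any genuine obstacle here: the statement is a direct specialization of Theorem \ref{main}, and all the real content lives in the proof of that theorem. The only point worth emphasizing in the write-up is why this instance is not already subsumed by the earlier literature — the hypothesis allows $G$ to have infinitely many vertices of infinite degree (in contrast with \cite{aharoni}) and allows rays that are attached, in the sense of \cite{bruhn}, to such vertices — so that the corollary genuinely records a case outside the scope of \cite{aharoni}, \cite{bruhn} and \cite{berger}.
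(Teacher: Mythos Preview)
Your proposal is correct and matches the paper's treatment: the corollary is stated there as an immediate particular instance of Theorem~\ref{main}, with no separate proof, since the hypothesis ``every ray meets only finitely many vertices of finite degree'' trivially forbids alternating rays. Your additional remark on why this case is not covered by \cite{aharoni}, \cite{bruhn} or \cite{berger} is exactly the point the paper is making when it introduces the corollary.
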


\section{Tools for addressing Theorem \ref{main}}

\paragraph{}

Let $T$ be a spanning tree of a graph $G$, which has a natural tree-order $\leq$ after we fix a root $r\in T$. In this case, for any $v\in V(G)$ and $A\subseteq V(G)$, we define the sets $\lceil v \rceil:=\{u\in V(G) : u \leq v\}$, $\lfloor v\rfloor :=\{u\in V(G) : u \geq v\}$, $[v]:=\lceil v \rceil \cup \lfloor v\rfloor$, $\lceil A\rceil :=\bigcup_{v\in A}\lceil v \rceil$, $\lfloor A\rfloor :=\bigcup_{v\in A}\lfloor v\rfloor$ and $[A] = \bigcup_{v\in A}[v]$. We say that $A$ is an \textbf{antichain} on $T$ if its elements are pairwise incomparable, so that $\{\lfloor v\rfloor : v \in A\}$ is a family of pairwise disjoint subsets of $T$. Naturally, the \textbf{successors} of a vertex $v\in V(G)$ are the $\leq-$minimal vertices from $\lfloor v \rfloor \setminus \{v\}$, defining the set $S(v):=N_T(v)\cap \lfloor v\rfloor$. If $u\in V(G)$ is a second vertex, the unique path in $T$ connecting it to $v$ shall be denoted by $uTv$ or $vTu$. In its turn, we recall that $T$ is called a \textbf{normal} tree if any edge of $G$ has comparable endpoints regarding $\leq$. Although not every connected graph admits normal spanning trees, this structure can be found in countable ones by recursive depth-search procedures, as first observed by Jung in \cite{jung} and outlined by Theorem 8.2.4 in \cite{diestellivro}.

Relying on the existence of normal spanning trees, the unfriendly partition claimed by Theorem \ref{main} will be constructed recursively. To that aim, it is useful to deal with partially defined colorings. More precisely, we call $c: D \to 2$ a \textbf{partial coloring} over a graph $G$ if it is defined on a subset $D\subseteq V(G)$. When $c' : D'\to 2$ is another partial coloring, we say that $c'$ and $c$ are \textbf{close} in a subset $A\subseteq D\cap D'$ if the vertex set $c\triangle c' :=\{v \in D\cap D' : c(v)\neq c'(v)\}$ is finite, contains only vertices of finite degree of $G$ and it its contained in $A$.

This definition is inherited from the work of Niel in \cite{niel}, where this author revisits the main result of Berger in \cite{berger} for countable graphs. If $c: V(G)\to 2$ is a coloring over a graph $G$ and $F\subseteq V(G)$ is a finite set of vertices of finite degree, these works also set the notations $\mathit{trans}(c) = \{uv \in E(G) : c(u)\neq c(v)\}$, $\mathit{trans}(c,F) = \{uv \in E(G): u\in F, c(u)\neq c(v)\}$ and $\mathit{dtrans}(c,F) = |\mathit{trans}(c)\setminus \mathit{trans}(c*F)|-|\mathit{trans}(c*F)\setminus \mathit{trans}(c)|$, where $c*F$ represents the coloring that differs from $c$ precisely in the vertices of $F$. In other words, $\mathit{dtrans}(c,F)$ computes the difference between the amount of edges with precisely one endpoint in $F$ whose ends are colored differently and the amount of those edges whose ends are colored the same.

If $G$ is a finite graph, any coloring $c$ that maximizes $|\mathit{trans}(c)|$ - i.e., any max-cut - is an example of an unfriendly partition. Actually, in an arbitrary graph $G$, a coloring $c$ is unfriendly in a vertex of finite degree $v\in V(G)$ if, and only if, $\mathit{dtrans}(c,\{v\})\geq 0$. Inspired by these observations, we say that $c$ is a \textbf{strongly maximal} coloring \textbf{in} some $A\subseteq V(G)$ if $\mathit{dtrans}(c,F)\geq 0$ for every finite set $F\subseteq A$ comprising vertices of finite degree. If this property is verified when $A=V(G)$, we simply call $c$ a strongly maximal coloring. In its turn, if $c$ is close in $A\subseteq V(G)$ to a strongly maximal coloring in the same vertex set $A$, then we say that $c$ is actually \textbf{almost strongly maximal} in $A$.

Especially by being preserved under minor perturbations, these maximality properties play a key role in the proof of Aharoni, Milner and Prikry in \cite{aharoni} that graphs containing only finitely many vertices of infinite degree admit unfriendly partition. In that direction, after revisiting previous works in the literature in order to grasp a convenient statement, the result below exemplify how useful strongly maximal colorings can be:

\begin{lemma}[Lemma 3 in \cite{aharoni} and Lemma 1.3 in \cite{berger}]\label{l21}
	Let $G$ be a countable graph and $c: V(G)\to 2$ be a coloring. Let $D\subseteq V(G)$ be a subset such that $c$ is strongly maximal in $V(G)\setminus D$. If $v\in D$ is a vertex of finite degree or a vertex of infinite degree in which $c$ is not unfriendly, then $c*\{v\}$ is almost strongly maximal in $V(G)\setminus D$.  
\end{lemma}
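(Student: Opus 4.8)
The plan is to repair $c_{0}:=c*\{v\}$ into a coloring that is strongly maximal in $V(G)\setminus D$ by flipping only finitely many vertices of finite degree lying in $V(G)\setminus D$. Put $B:=\{u\in N(v):c(u)\neq c(v)\}$. The first step is to note that $B$ is finite: this is immediate when $\deg(v)<\infty$, and when $\deg(v)=\infty$ it follows from $c$ not being unfriendly at $v$, since then $|\{u\in N(v):c(u)\neq c(v)\}|<|\{u\in N(v):c(u)=c(v)\}|$, and in a countable graph this strict inequality forces the smaller side, namely $B$, to be finite.

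The second step is the estimate $\mathit{dtrans}(c_{0},F)\geq -2|B|$ for every finite set $F\subseteq V(G)\setminus D$ of vertices of finite degree. Since $v\notin F$, the colorings $c_{0}$ and $c$ agree off $\{v\}$, so $\mathit{dtrans}(c_{0},F)$ and $\mathit{dtrans}(c,F)$ are computed from the same edges with the same signs, except for the edges joining $F$ to $v$: such an edge $uv$ contributes $2$ less to $\mathit{dtrans}(c_{0},F)$ than to $\mathit{dtrans}(c,F)$ exactly when $u\in B$, and $2$ more when $u\in N(v)\setminus B$. Hence $\mathit{dtrans}(c_{0},F)\geq \mathit{dtrans}(c,F)-2|F\cap B|\geq -2|B|$, the last inequality because $c$ is strongly maximal in $V(G)\setminus D$. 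As $\mathit{dtrans}(c_{0},\cdot)$ is integer-valued and bounded below, the quantity $M:=-\min\mathit{dtrans}(c_{0},F)$, with the minimum taken over all such $F$, is a well-defined integer with $M\geq 0$ (take $F=\emptyset$), attained by some finite $W\subseteq V(G)\setminus D$ consisting of vertices of finite degree.

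The third step is to verify that $c':=c_{0}*W$ works. Using $c_{0}*W*F=c_{0}*(W\triangle F)$ together with the additivity $\mathit{dtrans}(c_{0},W\triangle F)=\mathit{dtrans}(c_{0},W)+\mathit{dtrans}(c_{0}*W,F)$ — a routine edge count, valid because every flip alters only finitely many edges — one gets, for any finite $F\subseteq V(G)\setminus D$ of vertices of finite degree, $\mathit{dtrans}(c',F)=\mathit{dtrans}(c_{0},W\triangle F)+M\geq -M+M=0$, since $W\triangle F$ is again such a set. Thus $c'$ is strongly maximal in $V(G)\setminus D$. Finally, $c'$ differs from $c*\{v\}=c_{0}$ exactly on the finite set $W$, all of whose vertices have finite degree and lie in $V(G)\setminus D$; so $c*\{v\}$ is close in $V(G)\setminus D$ to the coloring $c'$, which is strongly maximal in $V(G)\setminus D$, and this is precisely what "almost strongly maximal in $V(G)\setminus D$" means. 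I expect the only subtle point to be the finiteness of $B$ (hence the existence of $M$), which is exactly where the two hypotheses on $v$ enter; the rest is bookkeeping with $\mathit{dtrans}$.
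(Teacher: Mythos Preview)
Your proof is correct, and it takes a genuinely different route from the paper's. The paper argues by contradiction: assuming $c*\{v\}$ is not almost strongly maximal, it builds an infinite sequence of colorings $c_n$ by repeatedly flipping finite sets $F_n\subseteq V(G)\setminus D$ with $\mathit{dtrans}(c_{n-1},F_n)<0$, so that the gain $\mu_n=|\mathit{trans}(c_n)\setminus\mathit{trans}(c*\{v\})|$ grows without bound; since the ``damage'' caused at $v$ is controlled by the finite quantity $k=|B|$, for large $n$ one obtains $\mathit{dtrans}(c,c\triangle\hat{c}_n)<0$, contradicting the strong maximality of $c$.

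Your argument is direct: you use the finiteness of $B$ to give an explicit lower bound $\mathit{dtrans}(c_0,F)\geq -2|B|$, which immediately yields a minimizer $W$, and then the cocycle identity $\mathit{dtrans}(c_0,W\triangle F)=\mathit{dtrans}(c_0,W)+\mathit{dtrans}(c_0*W,F)$ shows $c_0*W$ is strongly maximal. This is cleaner and more constructive---it actually exhibits the close strongly maximal coloring rather than deriving a contradiction from its nonexistence---while the paper's iterative argument stays closer to the original proofs in \cite{aharoni} and \cite{berger}. Both hinge on the same key point, namely that the hypothesis on $v$ forces $B$ to be finite.
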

\begin{proof}[Revisited proofs of Lemma 3 in \cite{aharoni} and Lemma 1.3 in \cite{berger}]
	 By the choice of $v$, the cardinal $k:=|\{u \in N(v): c(u)\neq c(v)\}|$ is finite. For a contradiction, suppose that $c*\{v\}$ is not strongly maximal in $V(G)\setminus D$. In particular, there is some finite set $F_0\subseteq V(G)\setminus D$ comprising only vertices of finite degree such that $\mathit{dtrans}(c*\{v\},F_0)<0$. Then, the coloring $c_0:=(c*\{v\})*F_0$ now satisfies $\mathit{dtrans}(c_0,F_0)\geq 0$ by definition of $\mathit{dtrans}$ but it is still not strongly maximal in $V(G)\setminus D$. For some $n\in\mathbb{N}$, suppose that it is defined a coloring $c_n$ which is not strongly maximal in $V(G)\setminus D$ and let $F_{n+1}\subseteq V(G)\setminus D$ be a finite set of vertices of finite degree such that $\mathit{dtrans}(c_n, F_{n+1})<0$. Then, set $c_{n+1}:=c_n*F_{n+1}$ and note that $\mathit{dtrans}(c_{n+1},F_{n+1}) >0$. In addition, $c_{n+1}$ is not strongly maximal in $V(G)\setminus D$ by the assumption that $c*\{v\}$ is not almost strongly maximal within this vertex set.

	Once constructed the sequence $\{c_n\}_{n\in\mathbb{N}}$ of colorings and the sequence $\{F_n\}_{n\in\mathbb{N}}$ of finite sets of finite degree vertices, define the cardinal $\mu_n = \left|\mathit{trans}(c_n)\setminus \mathit{trans}(c*\{v\})\right|$ for each $n\in\mathbb{N}$. Since $\mathit{dtrans}(c_n, F_{n+1})<0$, it follows that $\{\mu_n\}_{n\in\mathbb{N}}$ is a strictly increasing sequence of natural numbers. In its turn, define $\hat{c}_n :=c_n*\{v\}$ for every $n\in\mathbb{N}$, so that $\hat{c}_n|_D = c|_D$ and $\hat{c}_n$ is close to $c$ in $\hat{F}_n:=\displaystyle \bigcup_{i=0}^nF_n$. For some big enough $n\in\mathbb{N}$ we must have $|\mathit{trans}(\hat{c}_n, \hat{F}_n)| > |\mathit{trans}(c,\hat{F}_n)|$, because $|\mathit{trans}(\hat{c}_n, \hat{F}_n)| - |\mathit{trans}(c,\hat{F}_n)| \geq \mu_n - k$. Therefore, it follows that $\mathit{dtrans}(c, c\triangle \hat{c}_n) < 0$ for these values of $n$, contradicting the strongly maximality assumption of $c$ in $V(G)\setminus D$.  
	  
\end{proof}

\begin{corol}\label{c22}
	Let $G$ be a countable graph and $c: V(G)\to 2$ be a coloring. Fix a subset $D\subseteq V(G)$ such that $c$ is strongly maximal in $V(G)\setminus D$. If $S\subseteq D$ is a finite set comprising vertices of finite degree or vertices of infinite degree in which $c$ is not unfriendly, then $c*S$ is almost strongly maximal in $(V(G)\setminus D)\cup S$.
\end{corol}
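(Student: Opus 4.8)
The plan is to argue by induction on $|S|$, using Lemma~\ref{l21} as the engine together with one extra ingredient about $\mathit{dtrans}$. The base case $S=\emptyset$ is immediate, since $c$ is close to itself and strongly maximal in $V(G)\setminus D=(V(G)\setminus D)\cup\emptyset$. For the inductive step I would pick $v\in S$, put $S'=S\setminus\{v\}$ and $D_1:=D\setminus S'$, and apply the induction hypothesis to $S'$ to obtain a coloring $c_1$ which is \emph{strongly} maximal in $(V(G)\setminus D)\cup S'=V(G)\setminus D_1$ and close to $c*S'$ in that set. Since $c_1$ differs from $c$ on only finitely many vertices of finite degree, it agrees with $c$ at $v$ whenever $v$ has infinite degree, and then, as only finitely many neighbours of $v$ have been recolored, $c_1$ is still not unfriendly at $v$; hence $v$ satisfies the hypotheses of Lemma~\ref{l21} with respect to $c_1$ and $D_1\ni v$.

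The delicate point is that Lemma~\ref{l21} only delivers almost strong maximality in $V(G)\setminus D_1$, whereas the corollary asks for it in $(V(G)\setminus D_1)\cup\{v\}=(V(G)\setminus D)\cup S$. If $v$ has infinite degree this enlargement is free: $(V(G)\setminus D_1)\cup\{v\}$ has the same vertices of finite degree as $V(G)\setminus D_1$, and both ``strongly maximal in'' and ``close in'' only see those. If $v$ has finite degree, I would instead invoke the following strengthening of Lemma~\ref{l21}, which I would record first: \emph{if $c$ is strongly maximal in $V(G)\setminus D$ and $v\in D$ has finite degree, then $c*\{v\}$ is almost strongly maximal in $(V(G)\setminus D)\cup\{v\}$.}

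To prove that strengthening I would copy the proof of Lemma~\ref{l21}: assuming it fails, one builds colorings $c_0,c_1,\dots$, each close to $c*\{v\}$ in $A:=(V(G)\setminus D)\cup\{v\}$ and none strongly maximal in $A$, with $c_{n}=c_{n-1}*F_{n}$ for a finite set $F_{n}\subseteq A$ of vertices of finite degree satisfying $\mathit{dtrans}(c_{n-1},F_{n})<0$. Exactly as there, the natural numbers $|\mathit{trans}(c_n)\setminus\mathit{trans}(c*\{v\})|$ strictly increase, and the same bookkeeping forces $\mathit{dtrans}(c,c\triangle c_n)\to-\infty$. Now $c\triangle c_n$ is a finite set of vertices of finite degree contained in $(V(G)\setminus D)\cup\{v\}$, so I may write it as $E_n$ or $E_n\cup\{v\}$ with $E_n\subseteq V(G)\setminus D$; using the additivity $\mathit{dtrans}(c,E_n\cup\{v\})=\mathit{dtrans}(c,E_n)+\mathit{dtrans}(c*E_n,\{v\})$ (valid since $v\notin E_n$), together with $\mathit{dtrans}(c,E_n)\ge 0$ because $c$ is strongly maximal in $V(G)\setminus D$, and $|\mathit{dtrans}(c*E_n,\{v\})|\le\deg(v)$, I conclude $\mathit{dtrans}(c,c\triangle c_n)\ge-\deg(v)$ for every $n$, contradicting $\mathit{dtrans}(c,c\triangle c_n)\to-\infty$. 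Thus the only genuinely new work beyond quoting Lemma~\ref{l21} is this additivity identity, which is checked edge by edge, and the region bookkeeping it powers; I expect this to be the main obstacle.

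Granting the strengthening, the inductive step finishes cleanly. Lemma~\ref{l21} (if $v$ has infinite degree) or the strengthening (if $v$ has finite degree), applied to $c_1$, yields a coloring $c_2$ strongly maximal in $(V(G)\setminus D)\cup S$ and close to $c_1*\{v\}$ there; and since $(c*S)\triangle(c_1*\{v\})=(c*S')\triangle c_1$ is a finite set of vertices of finite degree contained in $(V(G)\setminus D)\cup S'$, the coloring $c*S=(c*S')*\{v\}$ is close to $c_2$ in $(V(G)\setminus D)\cup S$. Hence $c*S$ is almost strongly maximal there, completing the induction.
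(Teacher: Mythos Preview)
Your argument is correct and follows the paper's approach: induction on $|S|$ with Lemma~\ref{l21} as the engine. The paper's own proof is a single line (``Follows immediately from the above result by induction on $|S|$''), so you are supplying considerably more detail. In particular, you rightly observe that Lemma~\ref{l21} only delivers almost strong maximality in $V(G)\setminus D$, while the corollary demands it in $(V(G)\setminus D)\cup S$; when the newly added vertex $v$ has infinite degree the enlargement is free, but for finite-degree $v$ it is not, and your strengthening of Lemma~\ref{l21}---proved via the additivity identity $\mathit{dtrans}(c,E\cup\{v\})=\mathit{dtrans}(c,E)+\mathit{dtrans}(c*E,\{v\})$ together with the bound $|\mathit{dtrans}(\cdot,\{v\})|\le\deg(v)$---is precisely what closes that gap. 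The paper glosses over this point; in every later application of Corollary~\ref{c22} the set $S$ consists solely of infinite-degree vertices, so the omission is harmless in context, but your treatment of the stated generality is the more rigorous one.
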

\begin{proof}
	Follows immediately from the above result by induction on $|S|$.
\end{proof}

For a given graph $G$, Lemma 2 in \cite{aharoni} applies compactness arguments in order to obtain a strongly maximal coloring $c: V(G)\to 2$, once we known that these bipartitions exist for finite graphs. If the set $S\subseteq V(G)$ comprising the vertices of infinite degree in which $c$ is not unfriendly is finite, then Corollary \ref{c22} shows that $c*S$ is close to a strongly maximal coloring of $G$. By the choice of $S$, this coloring is thus also an unfriendly partition. To summarize, this remark is a brief sketch of Theorem 1 in \cite{aharoni}, which claims the existence of unfriendly colorings for graphs containing only finitely many vertices of infinite degree. As another statement that is supported by compactness arguments, the following result due to Niel in \cite{niel} shall be useful in our proof for Theorem \ref{main}:

\begin{lemma}[\cite{niel}, Lemma 1.2.1.6]\label{niel}
	Let $G$ be a graph, $A\subseteq V(G)$ and $\mathcal{B} = \{B_i\}_{i\in I}$ be a family of pairwise disjoint subsets of $A$ such that, for distinct $i,j\in I$, there are no edges between vertices of $B_i$ and vertices of $B_j$. Suppose also that $N(B_i)\cap A$ is finite for every $i\in I$ and let $\tilde{c}: V(G)\to 2$ be a coloring which is strongly maximal in each such $B_i$. Then, there is a coloring $c: V(G) \to 2$ such that $c\triangle \tilde{c} \subseteq A$ comprises only vertices of finite degree and the following properties hold:
	\begin{itemize}
		\item $c$ is strongly maximal in $A$;
		\item $c|_{B_i}$ is close to $\tilde{c}|_{B_i}$ for every $i\in I$;
		\item For every finite $N\subseteq A$ there are only finitely many indexes $i\in I$ such that $\tilde{c}$ is strongly maximal in $B_i$, $N(B_i)\cap A = N$ but $c|_{B_i}\neq \tilde{c}|_{B_i}$.
	\end{itemize}
\end{lemma}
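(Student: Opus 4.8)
The plan is to build the coloring $c$ by a compactness argument, realizing it as a limit of colorings defined on finite subgraphs. First I would enumerate $A = \{a_0, a_1, a_2, \dots\}$ (assuming $A$ countable; the general case reduces to this since each $B_i$ has finite neighborhood in $A$ and we only ever modify $\tilde c$ on $A$, so only countably many $B_i$ are relevant, and within each $B_i$ strong maximality is witnessed by finite sets) and consider the increasing sequence of finite vertex sets $A_n = \{a_0, \dots, a_n\}$. For each $n$, I want a partial coloring $c_n : A_n \to 2$ that agrees with $\tilde c$ off a finite set of finite-degree vertices, is ``locally strongly maximal'' in the sense that $\mathit{dtrans}$ is nonnegative for every finite $F \subseteq A_n$ of finite-degree vertices as measured against the fixed background $\tilde c$ on $V(G) \setminus A_n$, and is close to $\tilde c|_{B_i}$ for each $B_i$ meeting $A_n$. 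Such $c_n$ exists: starting from $\tilde c|_{A_n}$, repeatedly flip a bad finite set $F$; this strictly increases $|\mathit{trans}(c_n) \triangle \mathit{trans}(\tilde c)|$ within a bounded region (bounded because the only edges leaving $A_n$ go to the finite sets $N(B_i) \cap A$, and $\tilde c$ is already strongly maximal inside each $B_i$), so the process terminates, exactly as in the proof of Lemma~\ref{l21}.

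Next I would extract a convergent subsequence. Since each $c_n$ differs from $\tilde c$ only on finite-degree vertices and the relevant region is controlled, by a diagonal argument (König's lemma on the tree of finite partial colorings, pruned to those that are close to $\tilde c$ on every $B_i$ and locally strongly maximal) I obtain $c : A \to 2$, extended by $\tilde c$ on $V(G) \setminus A$, such that every finite restriction of $c$ appears as a restriction of some $c_n$. Local strong maximality passes to the limit: if some finite $F \subseteq A$ of finite-degree vertices had $\mathit{dtrans}(c, F) < 0$, this inequality only involves $F$ and its neighborhood, hence is visible in some $c_n$ with $F \subseteq A_n$, a contradiction; so $c$ is strongly maximal in $A$, giving the first bullet. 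The second bullet, $c|_{B_i}$ close to $\tilde c|_{B_i}$, follows because closeness on $B_i$ is witnessed by a finite symmetric difference of finite-degree vertices, and we maintained this at every finite stage — here I must be careful that the finite flips never accumulate inside a single $B_i$, which is guaranteed by $\tilde c$ being strongly maximal in $B_i$: a flip set $F$ decomposes along the partition, and its trace on $B_i$ can only help $\mathit{dtrans}$ finitely often before no longer being ``bad'' there.

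For the third bullet I would argue by contradiction: suppose infinitely many indices $i_1, i_2, \dots$ satisfy $\tilde c$ strongly maximal in $B_{i_k}$, $N(B_{i_k}) \cap A = N$ for one fixed finite $N$, yet $c|_{B_{i_k}} \neq \tilde c|_{B_{i_k}}$. Each such discrepancy forces at least one genuine flip inside $B_{i_k}$ during the construction; I would track a potential function measuring the total ``gain'' $|\mathit{trans}(c) \triangle \mathit{trans}(\tilde c)|$ restricted to edges incident to $N$, show each of the infinitely many $B_{i_k}$ contributes a strictly positive integer amount to it at the stage its flip occurs, and derive that this quantity is unbounded — but it is bounded by $|N| \cdot 2$ (each edge at a vertex of $N$ can be ``toggled'' at most once in net effect, and changes at $B_{i_k}$ for different $k$ are edge-disjoint near $N$ since the $B_i$ have no edges between them), contradiction. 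The main obstacle I expect is precisely this bookkeeping in the third bullet: one has to set up the potential so that flips in distinct $B_{i_k}$ that are forced by the same neighbor set $N$ are genuinely additive and cannot cancel, which is where the hypothesis ``no edges between $B_i$ and $B_j$'' together with ``$N(B_i) \cap A$ finite'' is doing the real work; making the convergence/diagonalization cooperate with this counting — i.e. ordering the flips so the accounting is honest — is the delicate part, whereas the first two bullets are essentially the finite-stage argument of Lemma~\ref{l21} passed through compactness.
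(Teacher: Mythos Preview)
The paper does not prove Lemma~\ref{niel}; it is quoted from Niel's unpublished work and used as a black box (see the acknowledgments). So there is no proof in the paper to compare against.

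Your overall plan---compactness to obtain a strongly maximal coloring on $A$ agreeing with $\tilde c$ off finite-degree vertices, then bookkeeping for the remaining bullets---is the natural strategy and almost certainly what Niel does. The first bullet is fine: strong maximality is a closed condition in $2^{V(G)}$, finite fragments are realized by max-cuts on the finite-degree part, and compactness gives a global $c$.

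The genuine gap is in your treatment of the third bullet, and it propagates back into the second. Your potential-function bound ``$|N|\cdot 2$'' does not make sense: vertices of $N$ can have infinitely many edges into the various $B_{i_k}$, so the quantity you propose to bound is not finite, and ``each edge toggled at most once'' is not a useful constraint when there are infinitely many edges. The key observation you are missing is purely combinatorial: if infinitely many indices $i$ satisfy $N(B_i)\cap A = N$, then every vertex of $N$ must have infinite degree, because it has a neighbor in each of these pairwise disjoint $B_i$. Since $c\triangle\tilde c$ contains only finite-degree vertices, this forces $c|_N = \tilde c|_N$; hence $\tilde c|_{B_i}$ is already strongly maximal in $B_i$ with respect to the boundary data coming from $c$, and one may simply \emph{set} $c|_{B_i}=\tilde c|_{B_i}$ for all but finitely many such $i$ without damaging strong maximality in $A$ (finite-degree flip sets $F$ cannot meet $N$, so $\mathit{dtrans}$ decomposes additively across these $B_i$). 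This replaces your potential argument entirely.

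For the second bullet, your claim that ``flips never accumulate inside a single $B_i$'' is not justified by the compactness argument as written: two strongly maximal colorings of the same $B_i$ can differ on infinitely many vertices (e.g.\ the two proper $2$-colorings of an infinite path), so closeness is not automatic in the limit. The right route is to handle each $B_i$ individually \emph{after} the coloring on $N_i=N(B_i)\cap A$ has stabilized: since $N_i$ is finite and $c$ differs from $\tilde c$ on $N_i$ only at finite-degree vertices, Corollary~\ref{c22} gives that $\tilde c|_{B_i}$ is almost strongly maximal with the new boundary, so a single finite adjustment inside $B_i$ suffices. One then has to check that performing these adjustments across all $i$ preserves strong maximality in $A$---this is where the edge-disjointness hypothesis and the finiteness of each $N_i$ do the work, and organizing this (rather than the raw König/diagonalization) is the actual content of the lemma.
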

\begin{corol}\label{c25}
	Let $G$ be a graph and $c: V(G) \to 2$ be a coloring which is strongly maximal in some $A'\subseteq V(G)$. If $A\supseteq A'$ is a bigger set such that $A\setminus A'$ comprises only finitely many vertices of finite degree, then $c$ is almost strongly maximal in $A$.
\end{corol}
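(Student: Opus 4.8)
The plan is to derive this directly from Lemma~\ref{niel}, applied to the very same set $A$, with $\tilde{c} := c$ and with the one-block family $\mathcal{B} := \{A'\}$; the coloring that lemma returns will serve as the strongly maximal witness to which $c$ is close.

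Write $S := A \setminus A'$, which by hypothesis is a finite set of vertices of finite degree. First I would verify that $(A, \mathcal{B}, \tilde{c})$ meets the hypotheses of Lemma~\ref{niel}. Since $\mathcal{B}$ has a single member it is trivially a family of pairwise disjoint subsets of $A$, and the requirement that there be no edges joining distinct blocks is vacuous. Next, $N(A')$ is disjoint from $A'$ by the definition of the neighbourhood of a set, so $N(A') \cap A = N(A') \cap S \subseteq S$ is finite. Finally, $\tilde{c} = c$ is strongly maximal in the unique block $A'$, which is exactly the standing assumption.

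Lemma~\ref{niel} then produces a coloring $c' : V(G) \to 2$ such that: $c'$ is strongly maximal in $A$; the difference set $c \triangle c'$ is contained in $A$ and consists only of vertices of finite degree; and $c'|_{A'}$ is close to $c|_{A'}$ (the third bullet of the lemma being automatic since $|I| = 1$). The closeness clause says that $\{v \in A' : c(v) \neq c'(v)\}$ is finite; combining this with $c \triangle c' \subseteq A = A' \cup S$ and the finiteness of $S$, we conclude that $c \triangle c'$ is itself finite. Hence $c$ is close in $A$ to the strongly maximal coloring $c'$, which is precisely the assertion that $c$ is almost strongly maximal in $A$.

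No step here is genuinely difficult; the one point that needs care is why $c \triangle c'$ comes out finite rather than merely contained in $A$. For this it is crucial to feed $A'$ into $\mathcal{B}$ as a proper block, so that the closeness bullet of Lemma~\ref{niel} controls $c$ on the whole of $A'$ up to a finite error, the leftover part $S$ being harmless only because it is finite; taking the empty family instead would not suffice.
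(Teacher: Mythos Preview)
Your proof is correct and follows exactly the approach of the paper, which likewise applies Lemma~\ref{niel} with $\tilde{c}=c$ and the single-block family $\mathcal{B}=\{A'\}$; you have simply spelled out the verification of the hypotheses and the passage from the lemma's output to almost strong maximality that the paper leaves implicit. Your remark that one must include $A'$ as a block (rather than taking $\mathcal{B}=\emptyset$) so that the closeness clause bounds the difference on $A'$ is exactly the point.
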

\begin{proof}
	This follows immediately from Lemma \ref{niel} when considering $\tilde{c} = c$ and $\mathcal{B} = \{A'\}$.
\end{proof}
\begin{corol}\label{c24}
	Let $G$ be a graph and $c: D \to 2$ be a partial coloring which is defined on a set $D\subseteq V(G)$ such that $V(G)\setminus D$ is a finite set of vertices of finite degree. If $c$ is strongly maximal in $G[D]$, then any extension to $V(G)$ is almost strongly maximal in $D$.
\end{corol}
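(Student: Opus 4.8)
The plan is to reduce the statement to Corollary~\ref{c25}, whose hypothesis only asks for strong maximality on a slightly smaller set. Fix an arbitrary extension $c_1 : V(G) \to 2$ of $c$, put $U := V(G)\setminus D$, and note that, since $U$ is a finite set of vertices of finite degree, only finitely many edges of $G$ meet $U$; in particular $W := N_G(U)$ is a \emph{finite} subset of $D$. I would then split $W = W_0 \cup W_1$, where $W_0$ collects the vertices of $W$ of finite degree in $G$ and $W_1$ those of infinite degree.

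The first step is to check that $c_1$ is already strongly maximal in $D\setminus W$. If $F\subseteq D\setminus W$ is a finite set of vertices of finite degree of $G$, then no vertex of $F$ is adjacent to a vertex of $U$ (this is exactly what $F\cap W=\emptyset$ says), so every edge of $G$ incident to $F$ lies in $G[D]$; hence $\mathit{dtrans}_G(c_1,F)=\mathit{dtrans}_{G[D]}(c,F)\geq 0$, the inequality being the assumption that $c$ is strongly maximal in $G[D]$ --- here one uses that a vertex of finite degree in $G$ also has finite degree in $G[D]$, and that $\mathit{dtrans}_G(c_1,F)$ only involves edges and colours inside $D$.

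Next I apply Corollary~\ref{c25} to $c_1$, with $A':=D\setminus W$ and $A:=D\setminus W_1=(D\setminus W)\cup W_0$: indeed $A\supseteq A'$ and $A\setminus A'=W_0$ is a finite set of vertices of finite degree, so the corollary yields that $c_1$ is almost strongly maximal in $D\setminus W_1$. It then remains to pass from $D\setminus W_1$ to $D$, and this is purely formal: every finite set of vertices of finite degree contained in $D$ is automatically contained in $D\setminus W_1$, because $W_1$ consists of vertices of infinite degree; consequently ``strongly maximal in $D$'' and ``strongly maximal in $D\setminus W_1$'' are the very same property, and likewise ``close in $D$'' coincides with ``close in $D\setminus W_1$'' for colourings that differ only on vertices of finite degree. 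Unwinding the definition of almost strong maximality, $c_1$ is therefore almost strongly maximal in $D$.

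The only real content is the first step: recognizing that ``strongly maximal in $G[D]$'' fails to coincide with ``strongly maximal in $D$ (relative to $G$)'' solely because of the edges joining $D$ to $U$, and that these edges are confined to the finite set $W$. A minor nuisance is that $W$ may contain vertices of infinite degree, which is why one cannot feed $D$ straight into Corollary~\ref{c25} and must first peel off $W_1$ and then re-absorb it by the degree bookkeeping of the last step.
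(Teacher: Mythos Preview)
Your argument is correct and follows essentially the same route as the paper: both first observe that any extension $\tilde c$ is strongly maximal in $D\setminus N(V(G)\setminus D)$ (your Step~1, the paper's final sentence), and then invoke Niel's lemma to upgrade this to almost strong maximality in $D$. The only cosmetic difference is that the paper applies Lemma~\ref{niel} directly with $A=D$ and $\mathcal{B}=\{D\setminus W\}$, which makes your $W_0/W_1$ split unnecessary since that lemma already guarantees that $c\triangle\tilde c$ consists only of finite-degree vertices.
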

\begin{proof}
	If $\tilde{c}$ is any extension of $c$ to $V(G)$, then the result follows immediately from Lemma \ref{niel} when considering $A = D$ and $\mathcal{B} = \{D\setminus N(V(G)\setminus D)\}$, observing that $(V(G)\setminus D)\cup N(V(G)\setminus D)$ is a finite set since $V(G)\setminus D$ contains only finitely many vertices of finite degree. In fact, $\mathit{dtrans}(\tilde{c},F) = \mathit{dtrans}(c,F)\geq 0$ if $F\subseteq V(G)$ is a finite set of vertices of finite degree which does not contain a neighbor of a vertex in $V(G)\setminus D$.
\end{proof}

\section{Proof of Theorem \ref{main}}

\paragraph{}
Bruhn, Diestel, Georgakopoulos and Sprüssel in \cite{bruhn} not only verify that rayless graphs admit unfriendly partition, but actually conclude that some partial colorings can be extended to remaining vertices in an unfriendly way. In a similar approach, we shall prove the following technical strengthening of Theorem \ref{main}, which is recovered when considering $K = \emptyset$ and observing that strongly maximal colorings are unfriendly in vertices of finite degree:

\begin{thm}\label{EnunciadoTecnico}
	Let $G$ and $K$ be two countable graphs, in which $G$ contains no alternating rays and $K$ is finite. Let $\hat{G}$ be any graph obtained from $G$ and $K$ after connecting them through arbitrary edges between $V(G)$ and $V(K)$. If $c_K: V(K)\to 2$ is any fixed partial coloring, there is an extension $\hat{c}: V(\hat{G})\to 2$ which is strongly maximal in $V(G)$ and unfriendly in its vertices of infinite degree.
\end{thm}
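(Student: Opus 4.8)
The plan is to argue by recursion along a normal spanning tree of $G$, using the strong-maximality toolkit of Section 2 to handle the finite-degree vertices and the normality of the tree to handle the infinite-degree ones. First I would reduce to the case in which $G$ is connected: writing $G=\bigsqcup_n G_n$, one applies the statement to each $\hat G[V(G_n)\cup V(K)]$ with the same $c_K$ and glues the colorings, which is harmless because $\mathit{dtrans}(\,\cdot\,,F)$ decomposes over the components of $G$ and $K$ is finite; the finitely many edges between $V(G)$ and $V(K)$ are likewise absorbed into the bookkeeping, since they affect strong maximality and unfriendliness only at finitely many vertices of $G$. So assume $G$ is connected and fix, via the normal spanning tree construction recalled above, a normal spanning tree $T\subseteq G$ with root $r$. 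Recall that then $\lceil v\rceil=rTv$ is finite and $N_G(\lfloor v\rfloor)\subseteq\lceil v\rceil$ for every $v$, and that for an antichain $A$ the sets $\{\lfloor v\rfloor:v\in A\}$ are pairwise disjoint and pairwise non-adjacent. If $G$ has only finitely many vertices of infinite degree, the statement follows from the compactness argument of Lemma~2 in \cite{aharoni} (which yields a coloring strongly maximal in $V(G)$) together with Corollary~\ref{c22}: flip the finite set of infinite-degree vertices at which it fails to be unfriendly, and restore strong maximality with Corollaries~\ref{c25} and~\ref{c24} and Lemma~\ref{niel}. So assume $G$ has infinitely many vertices of infinite degree.

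The core of the argument is the use of the hypothesis. Since $G$ has no alternating ray, the infinite-degree vertices cannot accumulate along a ray that also meets infinitely many finite-degree vertices; I would turn this into a decomposition of $V(G)$, read off from $T$, into a down-closed ``trunk'' part together with an antichain $\{v_i\}_{i\in I}$ above it, arranged so that each $\lfloor v_i\rfloor$ either contains only finitely many vertices of infinite degree or, after deleting a carefully chosen finite set, is again a graph of the same kind but strictly simpler for a suitable rank. The no-alternating-ray assumption is exactly what makes such a rank well defined and the recursion terminate: on a ``heavy'' branch (one whose vertices all have infinitely many infinite-degree vertices above them) the ray through it meets only finitely many vertices of finite degree, so each of its infinite-degree vertices already sees infinitely many forced opposite-colored neighbours below it and is automatically unfriendly, while all genuine recursion happens on the lighter pieces hanging off, where the infinite-degree vertices are sparse along rays.

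Granting such a decomposition, I would build $\hat c$ as follows. By the inductive hypothesis applied to $\hat G[\lfloor v_i\rfloor\cup\lceil v_i\rceil]$, with $\lceil v_i\rceil$ playing the role of the finite attached graph $K$, obtain for each $i$ a coloring of $\lfloor v_i\rfloor$ that is strongly maximal there and unfriendly at its infinite-degree vertices. Extend over the trunk by the compactness/strong-maximality argument once more, and finally amalgamate with Lemma~\ref{niel} applied to $\mathcal B=\{\lfloor v_i\rfloor\}_{i\in I}$ inside $A=V(G)$: this is legitimate precisely because each $N_G(\lfloor v_i\rfloor)$ is finite and the $\lfloor v_i\rfloor$ are pairwise non-adjacent. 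Lemma~\ref{niel} returns a coloring that is genuinely strongly maximal in $V(G)$ and perturbs each piece only at finitely many finite-degree vertices; hence the unfriendliness at infinite-degree vertices — which in a normal tree is witnessed by infinitely many descendants of each colour, a property stable under such finite perturbations by Lemma~\ref{l21} — survives the amalgamation, and likewise at the finitely many infinite-degree vertices of the trunk we finish with Corollary~\ref{c22}.

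The main obstacle is the structural claim in the second paragraph: extracting from ``$G$ has no alternating ray'' a well-founded recursion, i.e.\ defining the rank, exhibiting the antichain $\{v_i\}$ and the finite deletion that lowers it, and checking that the deleted sets really are finite sets of appropriate vertices so that Corollaries~\ref{c24} and~\ref{c25} apply. The secondary delicate point is the limit step of the recursion: one must choose the inductive invariant — strong maximality in $V(G)$, unfriendliness at infinite-degree vertices, and closeness of each piece to the coloring produced for it — so that the finitely-many-per-piece perturbations of Lemma~\ref{niel} do not accumulate across the infinitely many pieces into something that destroys strong maximality of $\hat c$; the third bullet of Lemma~\ref{niel} (only finitely many pieces with a given finite boundary are changed) is what prevents this.
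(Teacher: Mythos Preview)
Your overall architecture matches the paper's: define a rank from the no-alternating-ray hypothesis, induct on it, and at each step decompose $G$ along a normal spanning tree into pieces $\lfloor v_i\rfloor$ of smaller rank attached to a finite set $\lceil v_i\rceil$, then amalgamate via Lemma~\ref{niel}. This is exactly how the paper handles the case in which the root $r$ has \emph{finite} degree (Lemma~\ref{RaizGrauFinito}): there the antichain $S$ of $\leq$-minimal infinite-degree vertices gives a locally finite ``trunk'' $\lceil S\rceil$, and each $\lfloor u\rfloor$ hanging below a vertex of $S$ has strictly smaller rank; compactness plus Corollary~\ref{c22} and Lemma~\ref{niel} finish as you say.

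The gap is the dual case, when $r$ has infinite degree, and this is where most of the paper's work lies. Here the natural ``trunk'' $H=V(G)\setminus\lfloor S\rfloor$ (with $S$ the $\leq$-minimal \emph{finite}-degree vertices) consists entirely of infinite-degree vertices and is typically infinite, so ``extend over the trunk by the compactness/strong-maximality argument'' is not available. Your claim that on a heavy branch ``each of its infinite-degree vertices already sees infinitely many forced opposite-coloured neighbours below it and is automatically unfriendly'' is precisely what fails: a vertex $w\in H$ may have all its neighbours in $H$ or scattered among infinitely many pieces $\lfloor v_i\rfloor$, and there is no a priori reason infinitely many of those neighbours receive the opposite colour unless you \emph{arrange} it when you colour $w$ --- but then the same problem recurs for $w$'s neighbours. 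The paper resolves this with a genuinely new ingredient you have not sketched: a transfinite sequence $\{c_\alpha\}_{\alpha\le\Omega}$ of \emph{stable} partial colourings (Properties~S1--S3), extended through five separate cases (Cases~\ref{case3}--\ref{case4}) each designed to absorb one more vertex $w\in H$ while simultaneously guaranteeing it infinitely many opposite-coloured neighbours; the no-alternating-ray hypothesis is used not to define the decomposition but to show this process exhausts $V(\hat G)$ (Lemma~\ref{l36} and the ray $w_0x_0Tw_1x_1T\ldots$ built after it). Without an argument of this kind, the infinite-degree-root case is open in your proposal.
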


Throughout this section, we will then fix $G$, $K$ and $\hat{G}$ as in the above statement. In particular, since $K$ is finite, a vertex $v\in V(G)$ has infinite (resp. finite) degree in $G$ if, and only if, it has in $\hat{G}$. Therefore, we may refer to these vertices simply as vertices of infinite (resp. finite) degree. Since $G$ is countable, we will also fix a normal spanning tree $T$, whose root shall be denote by $r$. Introducing an hierarchy over the elements of $V(G)$, we will say that a vertex $v\in V(G)$ has $T$-\textbf{rank} $\mathrm{rank}_T(v)=0$ if $\lfloor v \rfloor$ comprises only vertices of finite degree or only vertices of infinite degree. Recursively, we then say that a vertex of infinite (resp. finite) degree $v\in V(G)$ has a $T-$\textbf{rank} $\mathrm{rank}_T(v) = \alpha$ if it has no smaller rank already defined but, instead, the vertices of finite (resp. infinite) degree in $\lfloor v \rfloor \setminus \{v\}$ do have smaller rank. Thus, the main hypothesis over $G$ ensures the following property:

\begin{lemma}\label{rank}
	$\mathrm{rank}_T$ is well-defined for every vertex of $G$.
\end{lemma}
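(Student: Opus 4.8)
The claim is that for every $v \in V(G)$, the recursive rank assignment terminates—i.e., $\mathrm{rank}_T(v)$ gets defined at some ordinal stage. Let me think about what could go wrong. The definition proceeds by a well-founded recursion on ordinals: $v$ gets rank $0$ if $\lfloor v \rfloor$ is "homogeneous" (all finite degree or all infinite degree); otherwise, a vertex $v$ of infinite degree gets rank $\alpha$ if all finite-degree vertices strictly below it already have rank $< \alpha$ (and dually for $v$ of finite degree). The potential failure: there is some vertex $v$ for which no rank is ever assigned.

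Suppose for contradiction that the set $B$ of vertices with undefined rank is nonempty. I would argue first that $B$ has no $\leq$-minimal element, which contradicts the fact that $T$ is a tree (well-founded downward... wait, actually $T$ is a rooted tree so $\lceil v \rceil$ is a finite chain — $\leq$ is well-founded and in fact $\lceil v\rceil$ is finite). Hmm — so actually I want: if $v \in B$, then $\lfloor v \rfloor \setminus \{v\}$ must contain another vertex of $B$ *of the opposite "type"* (finite vs. infinite degree), because if every opposite-type vertex in $\lfloor v\rfloor\setminus\{v\}$ had a defined rank, we could collect the supremum of those ranks and assign $v$ the successor ordinal. So: from any $v \in B$, there is $v' \in B$ with $v' > v$ and $v'$ of the degree-type opposite to $v$; iterating, there is $v'' \in B$ with $v'' > v'$ and $v''$ of the same type as $v$; continuing, I build an infinite strictly $\leq$-increasing sequence $v = v_0 < v_1 < v_2 < \cdots$ in $B$ whose degree-types alternate. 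This sequence lies along a branch of $T$, hence (since $T$ is a spanning tree of the connected... or rather, the sequence $v_0 < v_1 < \cdots$ with each $v_{n+1} \geq v_n$ determines a chain) the union of the finite paths $v_n T v_{n+1}$ is a ray $R$ in $T$. That ray passes through infinitely many vertices of finite degree and infinitely many of infinite degree, so $R$ is an alternating ray in $G$ — contradicting the hypothesis on $G$.

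The one subtlety I would be careful about: when $v$ has rank $0$ because $\lfloor v\rfloor$ is homogeneous, that's genuinely a base case and such $v \notin B$; and when I claim "there is $v' > v$ in $B$ of opposite type," I need that $\lfloor v\rfloor \setminus\{v\}$ actually contains a vertex of the opposite type at all — but if it didn't, then together with $v$'s type, $\lfloor v \rfloor$ would be homogeneous, giving $v$ rank $0$, again contradicting $v \in B$. So the dichotomy is clean: either $v$ has rank $0$, or $\lfloor v\rfloor\setminus\{v\}$ contains an opposite-type vertex, and if all such had defined ranks then $v$ would get a defined rank too. That's the heart of it.

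The main obstacle is really just packaging the recursion cleanly — making sure the "successor ordinal = sup of lower ranks $+\,1$" step is stated correctly (the set of ranks of opposite-type vertices below $v$ is a set of ordinals, so it has a supremum, and $v$'s candidate rank is that sup, or that sup $+1$, depending on the exact reading of the definition; either way it is a well-defined ordinal) — and verifying that the alternating chain I extract genuinely yields a ray, i.e., that consecutive $v_n, v_{n+1}$ are comparable in $T$ so their connecting $T$-paths concatenate into a one-way infinite path. Both are straightforward given normality of $T$ and the finiteness of each $\lceil v_n \rceil$, but they are the points where I would write the details out.
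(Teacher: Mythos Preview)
Your proposal is correct and follows essentially the same argument as the paper: assume some vertex is unranked, observe that any unranked vertex must have an unranked vertex of the opposite degree-type strictly above it (else the recursion would assign it a rank), iterate to obtain an increasing $T$-chain of alternating degree-types, and conclude that the ray of $T$ containing this chain is alternating, contradicting the hypothesis on $G$. The paper's proof is terser but the logic is identical; your extra care about the rank-$0$ base case and about the concatenation of the paths $v_nTv_{n+1}$ forming a genuine ray is appropriate but not a point of divergence.
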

\begin{proof}
	Suppose that there is a vertex $v_0 \in V(G)$ which has no assigned $T$-rank. Assuming that $v_0 \in V(G)$ has finite degree, there must exist a vertex $v_1 > v_0$ of infinite degree which also has no assigned $T$-rank, by the recursive definition of $\mathrm{rank}_T$. Similarly, there must exist now an unranked vertex $v_2 > v_1$ of finite degree. Proceeding this way, we may recursively find a chain $v_0 < v_1 < v_2 < v_3 < \dots$ of unranked vertices in $T$ such that $\{v_{2n}\}_{n\in\mathbb{N}}$ is a set of finite degree vertices and $\{v_{2n+1}\}_{n\in\mathbb{N}}$ comprises only vertices of infinite degree. Therefore, any ray $r$ on $T$ containing $\{v_n\}_{n\in\mathbb{N}}$ is an alternating ray in $G$, which contradicts the choice of this graph as in Theorem \ref{EnunciadoTecnico}.
\end{proof}

Then, we define the $T$-\textbf{rank} $\mathrm{rank}_T(G)$ of $G$ to be the $T-$rank of the root $r$. More generally, we set the \textbf{rank} of $G$ as $\mathrm{rank}(G) := \min \{\mathrm{rank}_T(G) : T\text{ is a normal spanning tree of }G\}$, so that the proof of Theorem \ref{EnunciadoTecnico} will actually be established by induction on this ordinal. Despite that, we will indeed assume that the previously fixed tree $T$ minimizes $\mathrm{rank}_T(G)$, witnessing the value of $\mathrm{rank}(G)$. If $\mathrm{rank}_T(G) = \mathrm{rank}(G) = 0$, then $G$ is either a locally finite or a $\aleph_0-$regular graph. In other words, its vertices have all finite degree or all infinite degree. In the former case, Theorem \ref{EnunciadoTecnico} is reduced to Lemma 2 in \cite{aharoni}. In the latter one, Theorem \ref{EnunciadoTecnico} claims the existence of an unfriendly partition for $G$, which can be constructed iteratively via a greedy algorithm (see Exercise 22 in \cite[[p. 263]{diestellivro}).

Therefore, we may assume that $\alpha := \mathrm{rank}_T(G) >0$ and that Theorem \ref{EnunciadoTecnico} holds if $G$ is replaced by a subgraph of smaller rank. Then, the lemma below highlights a first instance of the inductive argument we aim to develop:

\begin{lemma}\label{RaizGrauFinito}
	Suppose that the root $r$ of $T$ has finite degree as a vertex in $G$. If $c_K : V(K) \to 2$ is any fixed partial coloring as in Theorem \ref{EnunciadoTecnico}, there is an extension $\hat{c}: V(\hat{G})\to 2$ which is strongly maximal in $V(G)$ and unfriendly in its vertices of infinite degree. 
\end{lemma}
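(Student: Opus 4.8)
The plan is to exploit the hypothesis $\deg_G(r)<\infty$ by peeling off the finite ``core'' at the root. Put
\[
R:=\{\,v\in V(G):\text{every vertex of the path }rTv\text{ has finite degree in }G\,\},
\]
so that $r\in R$, the set $R$ is downward closed in $T$, and $G[R]$ is locally finite. Let $\partial R$ denote the set of $\leq$-minimal vertices of $V(G)\setminus R$. Since the $T$-predecessor of any $w\in\partial R$ lies in $R$, every $w\in\partial R$ has infinite degree and $\lceil w\rceil\setminus\{w\}\subseteq R$; by normality of $T$ this also yields $N_G(\lfloor w\rfloor)\subseteq\lceil w\rceil\setminus\{w\}$, a finite set. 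Finally $\partial R$ is an antichain, $V(G)$ is the disjoint union of $R$ with the sets $\lfloor w\rfloor$ for $w\in\partial R$, and there is no $G$-edge between $\lfloor w\rfloor$ and $\lfloor w'\rfloor$ when $w\neq w'$.

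Next I would colour the pieces recursively. Fix any coloring $c_R:R\to2$ (its choice is immaterial) and regard $c_K\cup c_R$ as a partial coloring of $\hat G$ on $V(K)\cup R$. For each $w\in\partial R$, apply the inductive hypothesis --- Theorem \ref{EnunciadoTecnico} for graphs of rank smaller than $\alpha$ --- to the graph $G[\lfloor w\rfloor]$, the finite graph $K^{(w)}:=\hat G[\,V(K)\cup(\lceil w\rceil\setminus\{w\})\,]$, the graph $\hat G^{(w)}:=\hat G[\,\lfloor w\rfloor\cup V(K)\cup(\lceil w\rceil\setminus\{w\})\,]$, and the restriction of $c_K\cup c_R$ to $V(K^{(w)})$. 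The key point is that $G[\lfloor w\rfloor]$ has rank below $\alpha$: because $\deg_G(r)<\infty$ we have $r\in R$, so $w\neq r$ and $w$ is an infinite-degree vertex of $\lfloor r\rfloor\setminus\{r\}$; the clause of the definition of $\mathrm{rank}_T$ fixing $\mathrm{rank}_T(r)=\alpha$ then forces $\mathrm{rank}_T(w)<\alpha$, and since passing from $G$ to $G[\lfloor w\rfloor]$ changes the degree of only finitely many vertices (never between finite and infinite), $\mathrm{rank}(G[\lfloor w\rfloor])\leq\mathrm{rank}_{T[\lfloor w\rfloor]}(w)=\mathrm{rank}_T(w)<\alpha$; the same observation shows $G[\lfloor w\rfloor]$ has no alternating rays. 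The inductive hypothesis then yields an extension $\hat c^{(w)}:V(\hat G^{(w)})\to2$ of $(c_K\cup c_R)|_{V(K^{(w)})}$ which is strongly maximal in $\lfloor w\rfloor$ and unfriendly in its infinite-degree vertices. Gluing, set $\tilde c:=c_K\cup c_R\cup\bigcup_{w\in\partial R}\hat c^{(w)}|_{\lfloor w\rfloor}$; since the pieces overlap only on $V(K)\cup R$, where they all equal $c_K\cup c_R$, this is a well-defined coloring of $\hat G$ extending $c_K$.

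Finally, I would globalise with Lemma \ref{niel}. Because $N_{\hat G}(\lfloor w\rfloor)\subseteq(\lceil w\rceil\setminus\{w\})\cup V(K)\subseteq V(\hat G^{(w)})$ and $\tilde c$ coincides with $\hat c^{(w)}$ throughout $V(\hat G^{(w)})$, the coloring $\tilde c$ is strongly maximal in each $\lfloor w\rfloor$ viewed as a subset of $V(\hat G)$; moreover $N_{\hat G}(\lfloor w\rfloor)\cap V(G)\subseteq\lceil w\rceil\setminus\{w\}$ is finite and there are no edges between distinct $\lfloor w\rfloor$. Applying Lemma \ref{niel} with $A:=V(G)$ and $\mathcal B:=\{\lfloor w\rfloor:w\in\partial R\}$ produces a coloring $\hat c:V(\hat G)\to2$ with $\hat c\triangle\tilde c\subseteq V(G)$ consisting of finite-degree vertices only, $\hat c$ strongly maximal in $V(G)$, and $\hat c|_{\lfloor w\rfloor}$ close to $\tilde c|_{\lfloor w\rfloor}$. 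Then $\hat c$ still extends $c_K$ (as $V(K)\cap V(G)=\emptyset$) and is strongly maximal in $V(G)$, so only unfriendliness at the infinite-degree vertices of $V(G)$ remains. Each such vertex $v$ lies in a unique $\lfloor w\rfloor$; on $N_{\hat G}(v)\subseteq V(\hat G^{(w)})$ the colorings $\hat c$ and $\hat c^{(w)}$ differ in only finitely many places (on $N_{\hat G}(v)\cap\lfloor w\rfloor$ by closeness, on the finite set $N_{\hat G}(v)\cap(\lceil w\rceil\setminus\{w\})$ trivially, nowhere on $V(K)$), and $\hat c(v)=\hat c^{(w)}(v)$. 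Since $\hat c^{(w)}$ is unfriendly at the infinite-degree vertex $v$, the set of its neighbours receiving the opposite colour under $\hat c^{(w)}$ must be infinite (otherwise the unfriendliness inequality at $v$ would fail), and infinitude of this set survives a finite change to the neighbourhood; hence $|\{u\in N(v):\hat c(u)\neq\hat c(v)\}|\geq|\{u\in N(v):\hat c(u)=\hat c(v)\}|$, as required.

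The main obstacle is the rank estimate of the second paragraph: the whole scheme works only because each $G[\lfloor w\rfloor]$ drops below $\mathrm{rank}(G)=\alpha$, and this is precisely where $\deg_G(r)<\infty$ is indispensable --- it puts $r$ into $R$, hence $w\neq r$, hence the defining clause of $\mathrm{rank}_T(r)=\alpha$ applies to the infinite-degree vertex $w$. A secondary subtlety is that the reassembly in Lemma \ref{niel} may recolour finite-degree vertices of $R$ and of the boundaries $\lceil w\rceil\setminus\{w\}$; one must check, as above, that a finite change to a neighbourhood cannot destroy unfriendliness at an infinite-degree vertex. All remaining verifications --- disjointness, absence of cross-edges, finiteness of the boundaries, and that $\hat G^{(w)}$ has the shape required by Theorem \ref{EnunciadoTecnico} --- are routine consequences of the normality of $T$ and the finiteness of $K$.
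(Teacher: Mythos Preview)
Your proof is correct and follows the same overall architecture as the paper's: isolate the $\leq$-minimal infinite-degree vertices (your $\partial R$ is exactly the paper's set $S$), colour the locally finite part below arbitrarily, apply the inductive hypothesis to the pieces hanging above, and reassemble with Lemma~\ref{niel}.

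The one substantive difference is where you place the cut. You include each $w\in\partial R$ \emph{inside} the piece $G[\lfloor w\rfloor]$ handed to the inductive call; since $w$ has infinite degree in $G[\lfloor w\rfloor]$, Theorem~\ref{EnunciadoTecnico} (by induction) already returns a coloring unfriendly at $w$, and a finite perturbation of its neighbourhood during the Lemma~\ref{niel} reassembly cannot spoil that. The paper instead treats the vertices of $S$ as part of the arbitrarily coloured boundary $H$ and applies induction only to the components strictly above $S$; consequently the glued coloring $\hat c$ need not be unfriendly at $S$, and the paper must repair this with an extra pass---flipping the bad vertices $X\subseteq S$ and invoking Lemma~\ref{l21}/Corollary~\ref{c22} to recover almost strong maximality---before the final application of Lemma~\ref{niel}. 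Your placement of the cut buys you the elimination of this repair step, giving a shorter argument; the paper's version, on the other hand, makes the boundary between the locally finite part and the inductively handled pieces coincide with a set of finite-degree vertices, which mirrors the shape of the stable colorings used later in Section~3.
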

\begin{proof}
	Denote by $S\subseteq V(G)$ the set of $\leq-$minimal vertices of infinite degree of $G$, so that $\lceil S\rceil\setminus S $ contains only vertices of finite degree. Therefore, since $T$ is a normal tree and $\lceil v \rceil$ is finite for every $v\in V(G)$, the set $\lceil S\rceil$ induces a locally finite subgraph of $G$. In its turn, the vertex sets of the connected components of $G\setminus \lceil S\rceil$ are given by $\{\lfloor u \rfloor\}_{u\in S'}$, where $S'$ comprises the $\leq-$minimal vertices of $G\setminus \lceil S\rceil$. For convenience, we split $S'$ into the subsets $I:=\{u \in S': u> v \text{ for some }v\in S\}$ and $J:=S'\setminus I$. Once $S$ is defined as the set containing all $\leq-$minimal vertices of infinite degree in $G$, all the vertices from $\bigcup_{u \in J}\lfloor u \rfloor $ have also finite degree. Therefore, the subgraph $H$ of $G$ induced by $\lceil S\rceil \cup \bigcup_{u\in J}\lfloor u \rfloor $ is locally finite and the vertex sets of the connected components of $G\setminus H$ are given by $\{\lfloor u\rfloor\}_{u\in I}$. We then fix any arbitrary partial coloring $c_{H}: V(K)\cup V(H) \to 2$ such that $c_H|_{V(K)} = c_K$.

	Given $u\in I$, we now observe that $T_u := T[\lfloor u \rfloor]$ is a normal spanning tree for $B_u :=G[\lfloor u \rfloor]$, since $T$ is a normal spanning tree for $G$. On the other hand, by definition of $I$, there is a vertex $v\in S'$ such that $v > u$. Hence, $\mathrm{rank}(B_u)\leq \mathrm{rank}_{T_u}(B_u) = \mathrm{rank}_T(u)\leq \mathrm{rank}_T(v) < \mathrm{rank}_T(r) = \mathrm{rank}(G)$, where the strict inequality in this expression follows from the fact that $r$ has finite degree by assumption while $N(v)$ is infinite. Then, Theorem \ref{EnunciadoTecnico} claims (by induction) that there is a coloring $\hat{c}_u : V(B_u) \cup N_{\hat{G}}(B_u) \to 2$ which agrees with $c_{H}$ in $N_{\hat{G}}(B_u)\subseteq \lceil v\rceil \cup V(K)$, it is strongly maximal in $B_u$ and it is unfriendly in its vertices of infinite degree. In particular, the coloring $\hat{c}:= c_H\cup \bigcup_{u\in I}\hat{c}_u$ is well-defined over $V(\hat{G})$.

	Now, let $X\subseteq S$ denote the set of vertices for which $\hat{c}$ is not unfriendly and fix $v\in X$. According to Lemma \ref{l21}, $\hat{c}*\{v\}$ is almost strongly maximal in $\lfloor v \rfloor \setminus \{v\} = \displaystyle \bigcup_{\substack{u\in I \\ u > v}}\lfloor u \rfloor $. Hence, let $\hat{\hat{c}} : V(\hat{G})\to 2$ be a coloring such that:
	
	\begin{itemize}
		\item $\hat{\hat{c}}$ agrees with $\hat{c}*X$ in $V(K)\cup V(H)\cup\bigcup_{v \in S\setminus X}\lfloor v\rfloor$. In particular, $\hat{\hat{c}}$ is strongly maximal in $\lfloor v \rfloor \setminus \{v\}$ for every $v\in S\setminus X$;
		\item $\hat{\hat{c}}$ is strongly maximal in $\lfloor v \rfloor \setminus \{v\}$ for every $v\in X$ and, within this subset, it differs from $\hat{c}$ in only finitely many vertices of finite degree;
		\item $\hat{\hat{c}}$ agrees with $c*X$ in the vertices of infinite degree of $G$. In particular, the definition of $X$ and the above two items ensure that $\hat{\hat{c}}$ is unfriendly in the vertices of $S$, since every $v\in V(G)$ has only finitely many neighbors out of $\lfloor v \rfloor \setminus \{v\}$. Similarly, $\hat{\hat{c}}$ is unfriendly in the vertices of infinite degree that do not belong to $S$ because so does $\hat{c}$ by its construction.
	\end{itemize}     

Therefore, after applying Lemma \ref{niel} to the coloring $\hat{\hat{c}}$, the set $A=V(\hat{G})$ and the family $\mathcal{B}:=\{\lfloor v\rfloor \setminus \{v\}: v \in S\}$, we obtain a strongly maximal coloring $c: V(\hat{G})\to 2$ as required by Theorem \ref{EnunciadoTecnico}. In fact, $c$ is unfriendly in the vertices of $S$ since $c|_{\lfloor v \rfloor}\triangle \hat{\hat{c}}|_{\lfloor v \rfloor}$ is a finite set of vertices of finite degree. By the same reason, $c$ is unfriendly in the vertices of infinite degree that do not belong to $S$ once so does $\hat{\hat{c}}$.  
\end{proof}

Hence, due to Lemma \ref{RaizGrauFinito}, it remains to consider the case in which the root $r$ of $T$ has infinite degree. Similar to the beginning of the above proof, let then $S\subseteq V(G)$ denote the set of $\leq-$minimal vertices of finite degree in $G$. If $u$ is a successor of some $v \in S$, then $T_u:=T[\lfloor u \rfloor]$ is again a normal spanning tree for $B_u:=G[\lfloor u \rfloor]$. Now,  $\mathrm{rank}(B_u)\leq \mathrm{rank}_{T_u}(B_u) = \mathrm{rank}_T(u)\leq \mathrm{rank}_T(v) < \mathrm{rank}_T(r) = \mathrm{rank}(G)$, where the strict inequality in this expression follows from the fact that $r$ has infinite degree by assumption while $N(v)$ is finite. In particular, the following instance of Theorem \ref{EnunciadoTecnico} holds:

\begin{lemma}\label{l34}
	For a given $X \subseteq S$, let $D\subseteq V(\hat{G})$ be a set containing $V(K)$ and disjoint from $\lfloor X\rfloor $. Then, in the subgraph induced by $D\cup \lfloor X\rfloor$, any coloring $c' : D \to 2$ can be extended to a coloring $c: D \cup \lfloor X\rfloor \to 2$ which is strongly maximal in $\lfloor X\rfloor$;
		 
\end{lemma}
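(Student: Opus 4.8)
The plan is to reduce Lemma \ref{l34} to the inductive hypothesis (Theorem \ref{EnunciadoTecnico}) applied separately to each branch hanging below a vertex of $X$, and then to glue the resulting colorings together using the compactness tool Lemma \ref{niel}. First I would observe that, since $X\subseteq S$ is an antichain of $\leq$-minimal vertices of finite degree, the sets $\{\lfloor v\rfloor : v\in X\}$ are pairwise disjoint and, because $T$ is a normal spanning tree, there are no edges of $G$ between $\lfloor v\rfloor$ and $\lfloor w\rfloor$ for distinct $v,w\in X$ (any such edge would have comparable endpoints, forcing $v$ and $w$ comparable). Moreover, for each $v\in X$, the outside neighborhood $N_{\hat G}(\lfloor v\rfloor)$ is contained in $\lceil v\rceil\cup V(K)$, hence finite, since $\lceil v\rceil$ is finite in a normal spanning tree and $K$ is finite.

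Next, fix $v\in X$ and let $u$ range over the successors $S(v)$ of $v$. As remarked in the paragraph preceding the lemma, each $B_u=G[\lfloor u\rfloor]$ admits the normal spanning tree $T_u=T[\lfloor u\rfloor]$ with $\mathrm{rank}(B_u)\le\mathrm{rank}_T(u)\le\mathrm{rank}_T(v)<\mathrm{rank}(G)=\alpha$, because $v$ has finite degree while the root $r$ has infinite degree. So I would apply the inductive form of Theorem \ref{EnunciadoTecnico} to $G=B_u$ with the auxiliary finite graph $K$ replaced by the (finite) graph induced on $N_{\hat G}(B_u)\subseteq\lceil v\rceil\cup V(K)$, and with the prescribed partial coloring on that set given by $c'$ restricted to $D\cap N_{\hat G}(B_u)$ together with an arbitrary choice on the few vertices of $\lceil v\rceil$ not already in $D$; this is legitimate precisely because those finitely many vertices of $\lceil v\rceil\setminus\{v\}$ have finite degree, so they are admissible members of the attached finite graph. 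Do this first on a fixed vertex $v$ itself: colour $v$ arbitrarily (say extend $c'$ to include $v$), then colour each branch $B_u$, $u\in S(v)$, strongly maximally relative to that extended boundary. The union over $u\in S(v)$ of these colorings, together with the colour of $v$, is strongly maximal in $\bigcup_{u\in S(v)}\lfloor u\rfloor=\lfloor v\rfloor\setminus\{v\}$; since $v$ has finite degree, Corollary \ref{c25} (adding one finite-degree vertex) shows this coloring is \emph{almost} strongly maximal in $\lfloor v\rfloor$, i.e.\ close in $\lfloor v\rfloor$ to a strongly maximal coloring $\tilde c_v$ of $\lfloor v\rfloor$.

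Carrying this out for every $v\in X$ simultaneously and letting $\tilde c$ be any coloring of $D\cup\lfloor X\rfloor$ that extends $c'$ and agrees with each $\tilde c_v$ on $\lfloor v\rfloor$, I obtain a coloring that is strongly maximal in each $\lfloor v\rfloor$, $v\in X$. I would then invoke Lemma \ref{niel} with $A=\lfloor X\rfloor$ and the family $\mathcal B=\{\lfloor v\rfloor : v\in X\}$ — which is admissible by the disjointness and no-cross-edges observations above, and by finiteness of each $N(\lfloor v\rfloor)\cap A$ — to produce a coloring $c$ that is strongly maximal in all of $\lfloor X\rfloor$, differs from $\tilde c$ only in finitely many finite-degree vertices inside $\lfloor X\rfloor$, and leaves $D$ untouched. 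Since $c\triangle\tilde c\subseteq\lfloor X\rfloor$ and $c|_D=\tilde c|_D=c'$, this $c$ is the required extension. The main obstacle I anticipate is purely bookkeeping at the boundary: making sure that the finite attached graph fed into the inductive call for each $B_u$ correctly records all edges from $\lfloor u\rfloor$ to $\lceil v\rceil\cup V(K)$ and that the prescribed boundary colour is mutually consistent across the branches $u\in S(v)$ — this is where one must be careful, but normality of $T$ guarantees every such edge goes to $\lceil u\rceil$, so the boundary data is finite and well-defined, and no genuine difficulty arises.
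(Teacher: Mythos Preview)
Your approach is essentially the paper's: induct branchwise on each $B_u$ for $u\in S(v)$, absorb the finite-degree vertex $v$ via Corollary~\ref{c25}, and take the union over $v\in X$. Two slips to fix. First, the vertices of $\lceil v\rceil\setminus\{v\}$ lie in $H$ (since $v\in S$ is $\leq$-minimal of finite degree) and hence have \emph{infinite} degree; this is harmless, as the attached finite graph in Theorem~\ref{EnunciadoTecnico} carries no degree hypothesis. Second, and more substantive: the lemma asks for strong maximality \emph{in the induced subgraph} $\hat G[D\cup\lfloor X\rfloor]$, so the boundary fed to the inductive call should be $N_{\hat G}(B_u)\cap(D\cup\{v\})$, not the full $N_{\hat G}(B_u)$. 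Vertices of $\lceil v\rceil\setminus(D\cup\{v\})$ may have infinitely many neighbours in $\lfloor u\rfloor$, so including them yields strong maximality in a strictly larger graph, and this discrepancy cannot be repaired afterwards by the finite-degree tools (Corollary~\ref{c25}, Lemma~\ref{niel}). The paper makes exactly this restriction. Your closing appeal to Lemma~\ref{niel} is also superfluous: because the sets $\{\lfloor v\rfloor:v\in X\}$ are pairwise non-adjacent, $\mathit{dtrans}$ splits additively over any finite $F\subseteq\lfloor X\rfloor$, so the union $c'\cup\bigcup_{v\in X}c_v$ is already strongly maximal in $\lfloor X\rfloor$ --- which is how the paper concludes.
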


\begin{proof}
	Let $c': D \to 2$ be a coloring as in the statement and fix any bipartition $c_X : X \to 2$ over $X$. Given $u\in\bigcup_{v\in X} S(v)$ and recalling that $\mathrm{rank}(B_u)< \mathrm{rank}(G)$, Theorem \ref{EnunciadoTecnico} claims (by induction) that there is an extension $c_u: N(B_u)\cap (D\cup \{v\})\cup V(B_u)\to 2$ to the coloring $c'\cup c_X|_{N(B_u)\cap (D\cup \{v\})}$ which is strongly maximal in $\lfloor u\rfloor$ when considering the graph induced by $N(B_u)\cap (D\cup \{v\})\cup V(B_u)$. Therefore, regarding the subgraph induced by its domain, the coloring $c_v':=c'\cup c_X\cup \bigcup_{u\in S_v}c_u$ is strongly maximal in $\lfloor S_v\rfloor$ for every $v\in X$, since $\{\lfloor u \rfloor : u \in S_v \}$ is the family of connected subsets of $\lfloor S_v \rfloor$. In its turn, Corollary \ref{c25} and the finite degree of $v$ ensure that $c_v'$ is close in $\lfloor v \rfloor$ to a coloring $c_v$ which is strongly maximal in this subset. Since $\{\lfloor v \rfloor : v \in X\}$ is a family of pairwise disjoint subsets of $G$ and there are no edges connecting two distinct of them (by normality of $T$), the coloring $c:= c'\cup \bigcup_{v\in X}c_v$ is thus well-defined and strongly maximal in $\lfloor X\rfloor$.   
\end{proof}

In order to continue the proof, we define the set $H:=V(G)\setminus \lfloor S\rfloor$, noticing that it contains only vertices of infinite degree in $G$ by definition of $S$. Then, we shall say that a partial coloring $c: D \to 2$ is \textbf{stable} if the following properties are verified:

\begin{itemize}
	\item \namedlabel{ps1}{\texttt{Property S1)}}: The domain of $c$ can be written as a disjoint union $D= H_c \cup \lfloor X_c\rfloor$, where $H_c\subseteq H$ and $X_c\subseteq H\cup S$ is an antichain on the tree order of $T$. In particular, if a vertex $v\in D$ has finite degree, then $v\in \lfloor X_c\rfloor $;
	\item \namedlabel{ps2}{\texttt{Property S2)}}: As a coloring defined over $\hat{G}[D]$, $c$ is strongly maximal in $\lfloor X_c\rfloor$. By definition of $H$ and the normality of $T$, this is equivalent to the following statements: $(i)$ $c$ is strongly maximal in $\lfloor v\rfloor$ for every $v \in S\cap D$ and $(ii)$ $c$ is strongly maximal in $\lfloor w\rfloor$ for every $w\in X'$, where $X'\subseteq D$ is any antichain on $T$ such that $S\cap D \subseteq \lfloor X'\rfloor$;
	\item \namedlabel{ps4}{\texttt{Property S3)}}: Every $w\in V(\hat{G})\setminus D$ has only finitely many neighbors above a given $v\in X_c$, i.e., the set $N(w)\cap \lfloor v\rfloor$ is finite;
\end{itemize}

The main feature of these colorings is that they can be extended while controlling their patterns in some up-closed subsets of $V(\hat{G})$. In that regard, the following technical result will support many constructive arguments from now on in this section:

\begin{lemma}\label{fixing}
	Let $c: D \to 2$ be a stable partial coloring of $\hat{G}$ and fix an extension $c': D'\to 2$ that is strongly maximal in $D'\setminus D\subseteq H$. Then, there is a coloring $\tilde{c}: D'\to 2$ which is strongly maximal in $\lfloor X_c\rfloor$, verifies $\tilde{c}\triangle c'\subseteq \lfloor X_{c}\rfloor$ and such that $(\tilde{c}\triangle c')\cap \lfloor v\rfloor$ is a finite set of vertices of finite degree for every $v\in X_{c}$. Moreover, we can assume that $\tilde{c}|_{\lfloor v \rfloor} = c|_{\lfloor v\rfloor}$ if $N_{\tilde{G}}(\lfloor v\rfloor)\cap (D'\setminus D) = \emptyset$.
\end{lemma}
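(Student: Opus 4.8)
The plan is to treat each block $\lfloor v\rfloor$, $v\in X_c$, essentially independently and then glue, exactly as in the proof of Lemma~\ref{l34}. Fix $v\in X_c$. Since $c$ is stable, Property~\ref{ps2} says $c$ is strongly maximal in $\lfloor v\rfloor$ as a coloring of $\hat G[D]$, and Property~\ref{ps4} says every vertex outside $D$ has only finitely many neighbours in $\lfloor v\rfloor$. The set $D'\setminus D$ is contained in $H$, hence disjoint from $\lfloor v\rfloor$ whenever $v\in S$ (because $\lfloor v\rfloor\subseteq\lfloor S\rfloor$ and $H=V(G)\setminus\lfloor S\rfloor$), and in general the vertices of $D'$ lying in $\lfloor v\rfloor$ are precisely those of $D\cap\lfloor v\rfloor$. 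So on $\lfloor v\rfloor$ the coloring $c'$ agrees with $c$, and the only new vertices that could affect strong maximality inside $\lfloor v\rfloor$ are the finitely many members of $(D'\setminus D)\cap N(\lfloor v\rfloor)$ — a finite set by Property~\ref{ps4}, all of infinite degree since $D'\setminus D\subseteq H$.

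First I would note that if $N_{\hat G}(\lfloor v\rfloor)\cap(D'\setminus D)=\emptyset$ there is nothing to do on this block: we simply keep $\tilde c|_{\lfloor v\rfloor}=c|_{\lfloor v\rfloor}=c'|_{\lfloor v\rfloor}$, which is strongly maximal there and gives the ``moreover'' clause for free. Otherwise, let $W_v:=N_{\hat G}(\lfloor v\rfloor)\cap(D'\setminus D)$, a finite nonempty set of infinite-degree vertices. The coloring $c'$ restricted to $\lfloor v\rfloor\cup W_v$ need not be strongly maximal in $\lfloor v\rfloor$ anymore, because colouring the vertices of $W_v$ may have created transitions or non-transitions at the (finitely many) vertices of $\lfloor v\rfloor$ adjacent to $W_v$. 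However, $W_v$ is finite, each $w\in W_v$ has finite degree \emph{into} $\lfloor v\rfloor$, and $\lfloor v\rfloor\setminus N(W_v)$ is a set on which $c'$ is still strongly maximal (changing colours only in $W_v$ does not disturb it); since $N(W_v)\cap\lfloor v\rfloor$ is finite, Corollary~\ref{c25} (applied inside $\hat G[\lfloor v\rfloor\cup W_v]$, with $A':=\lfloor v\rfloor\setminus N(W_v)$ and $A:=\lfloor v\rfloor$) yields a coloring $\tilde c_v$ of $\lfloor v\rfloor\cup W_v$, agreeing with $c'$ on $W_v$, that is close in $\lfloor v\rfloor$ to a strongly maximal coloring there; absorbing that ``close'' modification produces a $\tilde c_v$ which is genuinely strongly maximal in $\lfloor v\rfloor$ and satisfies $\tilde c_v\triangle c'\subseteq\lfloor v\rfloor$ is a finite set of finite-degree vertices. (One must check the elements of $c\triangle c_v$ genuinely have finite degree: every vertex of $\lfloor v\rfloor$ that is \emph{not} forced into $\lfloor X_c\rfloor$ by Property~\ref{ps1} has finite degree, and the perturbation lives among such vertices since strong-maximal adjustments only move finite-degree vertices — this is the bookkeeping to be careful about.)

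Then I would assemble $\tilde c:=c'\cup\bigcup_{v\in X_c}\tilde c_v|_{\lfloor v\rfloor}$, i.e.\ overwrite $c'$ on $\lfloor X_c\rfloor=\bigsqcup_{v\in X_c}\lfloor v\rfloor$ by the $\tilde c_v$'s while keeping $c'$ elsewhere. Since $X_c$ is an antichain on the normal tree $T$, the sets $\lfloor v\rfloor$ are pairwise disjoint with no edges between distinct ones, so strong maximality of each $\tilde c$ in each $\lfloor v\rfloor$ upgrades to strong maximality in $\lfloor X_c\rfloor$: any finite $F\subseteq\lfloor X_c\rfloor$ of finite-degree vertices splits as a disjoint union $\bigsqcup F\cap\lfloor v\rfloor$ with no cross edges, so $\mathit{dtrans}(\tilde c,F)=\sum_v\mathit{dtrans}(\tilde c,F\cap\lfloor v\rfloor)\ge 0$. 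The conditions $\tilde c\triangle c'\subseteq\lfloor X_c\rfloor$ and ``$(\tilde c\triangle c')\cap\lfloor v\rfloor$ finite of finite degree'' hold block-by-block by construction, and the ``moreover'' clause is exactly the empty-$W_v$ case handled above. The main obstacle, as usual in this paper, is not any single deep step but the verification that the local modifications stay confined to finite-degree vertices and do not leak across blocks; the antichain/normality property and Property~\ref{ps4} are precisely what make the gluing clean, and Corollary~\ref{c25} is the engine that converts ``$c'$ is strongly maximal away from a finite set'' into ``$\tilde c$ is strongly maximal everywhere in the block.''
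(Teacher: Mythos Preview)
Your plan is essentially the paper's own proof: treat each $\lfloor v\rfloor$ for $v\in X_c$ separately, use Corollary~\ref{c25} to repair strong maximality after the finitely many new neighbours in $D'\setminus D$ are added, and glue over the antichain using normality of $T$. The paper works in $G_v:=\hat G[D'\cap[v]]$ and applies Corollary~\ref{c25} with $A'=\lfloor v\rfloor$, $A=D'\cap[v]$; your choice $A'=\lfloor v\rfloor\setminus N(W_v)$, $A=\lfloor v\rfloor$ is a harmless variant (and arguably cleaner, since you justify why $c'$ is strongly maximal on $A'$).

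One bookkeeping point does need fixing: the ambient graph for your application of Corollary~\ref{c25} should be $G_v=\hat G[D'\cap[v]]$, not $\hat G[\lfloor v\rfloor\cup W_v]$. Vertices of $D\cap(\lceil v\rceil\setminus\{v\})$ may already be coloured and may have neighbours in $\lfloor v\rfloor$; if you drop them from the ambient graph, then (i) your claim that $c'$ is strongly maximal in $\lfloor v\rfloor\setminus N(W_v)$ can fail in that smaller graph (the missing edges to $\lceil v\rceil$ change $\mathit{dtrans}$), and (ii) even if Corollary~\ref{c25} succeeds there, the resulting strong maximality of $\tilde c_v$ in $\lfloor v\rfloor$ would be relative to $\hat G[\lfloor v\rfloor\cup W_v]$ rather than to $\hat G[D']$, which is what the lemma requires. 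Working in $G_v$ (which, by normality, contains all of $N_{\hat G}(\lfloor v\rfloor)\cap D'$) removes both issues; the rest of your argument then goes through verbatim.
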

\begin{proof}
	Given $v\in X_{c}$, \ref{ps4} in the definition of stable colorings asserts that every $w\in D'\setminus D$ has finite degree in the graph $G_v:=\hat{G}[D'\cap [v]]$. In particular, applied to this graph and the sets $A' = \lfloor v\rfloor$ and $A = D'\cap [v]$, Corollary \ref{c25} claims the existence of a strongly maximal coloring $c_v : V(G_v)\to 2$ which is close to $c'|_{V(G_v)}$ in $\lfloor v\rfloor$. Indeed, we can choose $c_v=c'|_{V(G_v)}$ if no vertex from $D'\setminus D$ has a neighbor in $\lfloor v\rfloor$. On the other hand, note that $V(G_v)$ contains $N_{\hat{G}}(\lfloor v\rfloor)\cap D'$ by the normality of $T$. Hence, $\tilde{c}:=c'|_{D'\setminus [X_{\alpha}]}\cup \bigcup_{v\in X_{\alpha}}c_v$ is a well-defined strongly maximal coloring as claimed by the statement.  
\end{proof}

 For some big enough ordinal $\Omega$, we will now construct a sequence of partial stable colorings $\{c_{\alpha}\}_{\alpha \leq \Omega}$ such that $c_{\Omega}: V(\hat{G})\to 2$ may be the coloring claimed by Theorem \ref{EnunciadoTecnico}. For every $\alpha < \Omega$, the domain of $c_{\alpha}$ will be denoted by $D_{\alpha}$ and shall be written as a disjoint union of the form $D_{\alpha}:=H_{\alpha}\cup \lfloor X_{\alpha}\rfloor$, where, as required by \ref{ps1} in the definition of stable colorings, $H_{\alpha}$ is a subset of $ H$ and $X_{\alpha}\subseteq H\cup S$ is an antichain on the tree-order of $T$. If $v \in D_{\beta}$ has infinite degree in $\hat{G}$ for some $\beta < \alpha$, we will also guarantee that $c_{\beta}(v) = c_{\alpha}(v)$.

First, we consider $c_0: D_0 \to 2$ to be an extension of $c_K$ which is also an unfriendly partition for the graph induced by the maximal $\aleph_0-$regular subset $H_0\subseteq H$. In other words, $H_0$ is the $\subseteq-$maximal subset of $H$ such that $N(w)\cap H_0$ is infinite for every $w\in H_0$, whose existence naturally follows from Zorn's Lemma. Then, as mentioned after the proof of Lemma \ref{rank}, $c_0$ might be constructed by routine greedy algorithms. Note that this is indeed a stable coloring since \ref{ps2} and \ref{ps4} are vacuously satisfied. In its turn, assuming that $c_{\alpha}$ is defined for some ordinal $\alpha$, the coloring $c_{\alpha+1}$ and its domain will be constructed according to one of the following cases:

\begin{case}\label{case3}
	Considering the tree order of $T$, suppose that there is $w\in H\setminus D_{\alpha}$ which contains infinitely many successors in $S\setminus D_{\alpha}$. Then, $c_{\alpha+1}$ and $D_{\alpha+1}$ are chosen so that $H_{\alpha+1}:= H_{\alpha}\cup N_{\hat{G}}(\lfloor X\rfloor)$ and $X_{\alpha+1}:=X_{\alpha}\cup X$, where $X\subseteq S(w)\cap S\setminus D$ is an infinite set such that $N_{\hat{G}}(\lfloor X\rfloor)=\{w'\leq w : |N(w')\cap \lfloor X\rfloor|=\aleph_0\}$. Moreover, $\{u\in N(w')\cap \lfloor X\rfloor : c_{\alpha+1}(u)\neq c_{\alpha+1}(w')\}$ is infinite for every $w'\in H_{\alpha+1}\setminus H_{\alpha}$ (and, in particular, for $w'=w$).
	
\end{case}
\begin{proof}[Construction of $c_{\alpha+1}$]
	We first note that an infinite set $X\subseteq S(w)\cap S\setminus D$ as in the above statement indeed exists since $\lceil w\rceil$ is finite and, by the normality of $T$, it contains $N_{\hat{G}}(\lfloor X\rfloor)$. In this case, $X_{\alpha+1} = X_{\alpha}\cup X$ is indeed an antichain on $T$ because so it is $X_{\alpha}$ by induction and $X\cap D_{\alpha} = \emptyset$. Moreover, observing that $X$ contains only successors of $w$, we highlight that $w\in N_{\hat{G}}(\lfloor X\rfloor)\subseteq D_{\alpha}$.

		On the other hand, Lemma \ref{l34} provides a coloring $c_{\alpha}': D_{\alpha+1}\to 2$ which extends $c_{\alpha}$ and is strongly maximal in $\lfloor X\rfloor$. After changing the colors of finitely many vertices if necessary, we can assume that $|\{u\in N(w')\cap \lfloor X\rfloor: c_{\alpha}'(u)\neq c_{\alpha}'(w')\}|=\aleph_0$, as claimed by Corollary \ref{c22} when applied to the graph $\hat{G}[D_{\alpha+1}\cap [w]]$ and the set $D = N_{\hat{G}}(\lfloor X\rfloor)$.

		Once $c_{\alpha}$ is a stable coloring, Lemma \ref{fixing} now claims the existence of a coloring $c_{\alpha+1}: D_{\alpha+1}\to 2$ which is strongly maximal in $\lfloor X_{\alpha}\rfloor$, verifies $c_{\alpha+1}\triangle c_{\alpha}'\subseteq \lfloor X_{\alpha}\rfloor$ and such that $(c_{\alpha+1}\triangle c_{\alpha}')\cap \lfloor v \rfloor$ is a finite set of vertices of finite degree for every $v\in X_{\alpha}$. Moreover, this intersection is empty if $N_{\hat{G}}(\lfloor v\rfloor)\cap (H_{\alpha+1}\setminus H_{\alpha}) = \emptyset$. Since $c_{\alpha+1}$ agrees with $c_{\alpha}'$ in $\lfloor X\rfloor$, it follows that $c_{\alpha+1}$ is actually strongly maximal in $\lfloor X_{\alpha+1}\rfloor$, proving that this coloring verifies \ref{ps2}. By the choice of the vertices in $H_{\alpha+1}\setminus H_{\alpha}$, \ref{ps4} is also satisfied by $c_{\alpha+1}$, so that this is indeed a stable coloring. Finally, we remark that $c_{\alpha+1}$ and $c_{\alpha}$ agree on the vertices of infinite degree belonging to $D_{\alpha}$. 
\end{proof}

\begin{case}\label{case1}
	Suppose that there is $w\in H\setminus D_{\alpha}$ for which there is $v\in S\setminus D_{\alpha}$ such that $N(w)\cap\lfloor v\rfloor$ is infinite. Then, $c_{\alpha+1}$ and $D_{\alpha+1}$ are chosen so that $H_{\alpha+1}:= H_{\alpha}\cup \{w'\in H\setminus D_{\alpha}: |N(w)\cap \lfloor v\rfloor|=\aleph_0\}$ and $X_{\alpha+1}:=X_{\alpha}\cup \{v\}$. Moreover, $\{u\in N(w')\cap \lfloor v\rfloor : c_{\alpha+1}(u)\neq c_{\alpha+1}(w')\}$ is infinite for every $w'\in H_{\alpha+1}\setminus H_{\alpha}$ (and, in particular, for $w'=w$).
\end{case}
\begin{proof}[Construction of $c_{\alpha+1}$]
	We first remark that $X_{\alpha+1}$ as defined above is indeed an antichain on $T$ because so it is $X_{\alpha}$ and since $v\in S\setminus D_{\alpha}$. Now, by applying the first item of Lemma \ref{l34} when considering $X = \lfloor v \rfloor$, there is an extension $c_{\alpha}': D_{\alpha+1}\to 2$ of $c_{\alpha}$ which is strongly maximal in $\lfloor v \rfloor$. After possibly changing the colors of finitely many vertices of finite degree and finitely many vertices of $H_{\alpha+1}\setminus H_{\alpha}$, we can assume that $|\{u\in N(w')\cap \lfloor v\rfloor : c_{\alpha}'(w')\neq c_{\alpha}'(u)\}| = \aleph_0$ for every $w'\in H_{\alpha+1}\setminus H_{\alpha}$, as claimed by Corollary \ref{c22} when applied to the subgraph $\hat{G}[[v]\cap D_{\alpha+1}]$ and considering $D = H_{\alpha+1}\setminus H_{\alpha}$.

	Once $c_{\alpha}$ is a stable coloring, Lemma \ref{fixing} now applies to $c_{\alpha}'$ and claims the existence of a coloring $c_{\alpha+1}$ which is strongly maximal in $\lfloor X_{\alpha}\rfloor$, verifies $c_{\alpha+1}\triangle c_{\alpha}'\subseteq \lfloor X_{\alpha}\rfloor$ and such that $(c_{\alpha+1}\triangle c_{\alpha}')\cap \lfloor v' \rfloor$ is a finite set of vertices of finite degree for every $v'\in X_{\alpha}$. In addition, this latter intersection is empty if $N_{\hat{G}}(\lfloor v'\rfloor)\cap (H_{\alpha+1}\setminus H_{\alpha}) = \emptyset$. Since $c_{\alpha+1}$ agrees with $c_{\alpha}'$ in $\lfloor v\rfloor$, the former coloring is actually strongly maximal in $\lfloor X_{\alpha+1}\rfloor$, so that it verifies \ref{ps2}. By the choice of the vertices in $H_{\alpha+1}\setminus H_{\alpha}$, \ref{ps4} also holds for $c_{\alpha+1}$, which is thus also a stable coloring. Finally, we note that $c_{\alpha}$ and $c_{\alpha+1}$ agree on the vertices of infinite degree of $D_{\alpha}$.
	
\end{proof}

\begin{case}\label{case5}
	Suppose that there is a vertex $w\in H\setminus D_{\alpha}$ such that $(\lfloor w\rfloor \setminus \{w\})\subseteq D_{\alpha}$. Then, $c_{\alpha+1}$ and $D_{\alpha+1}$ are chosen so that $X_{\alpha+1}:=(X_{\alpha}\setminus \lfloor w\rfloor)\cup \{w\}$ and $H_{\alpha+1}:=H_{\alpha}\cup I_w$, where $I_w:=\{w'\in H\setminus D_{\alpha}: |N(w')\cap \lfloor w\rfloor|=\aleph_0\}$. Moreover, $\{u\in N(w')\cap \lfloor w\rfloor : c_{\alpha+1}(u)\neq c_{\alpha+1}(w')\}$ is infinite for every $w'\in I_w$ (and, in particular, for $w'=w$).
\end{case}
\begin{proof}[Construction of $c_{\alpha+1}$]
	When restricted to $[w]\cap D_{\alpha}$, we now observe that $c_{\alpha}$ gives rise to a stable coloring $c_w:=c_{\alpha}|_{D_{\alpha}\cap [w]}$, where \ref{ps1} is verified by the sets $H_w:=H_{\alpha}\cap [w]$ and $X_w:=X_{\alpha}\cap \lfloor w\rfloor$. Then, after fixing any extension $c_w'$ of $c_w$ to $(D_{\alpha}\cap [w])\cup I_w$, Lemma \ref{fixing} provides a coloring $\tilde{c}_w: (D_{\alpha}\cap[w])\cup I_w \to 2$ which is strongly maximal in $\lfloor X_w\rfloor$, verifies $\tilde{c}_w\triangle c_w'\subseteq \lfloor X_w\rfloor$ and such that $(\tilde{c}_w\triangle c_w')\cap \lfloor w\rfloor$ is a finite set of vertices of finite degree for every $v\in X_w$. Moreover, we can assume that this latter intersection is empty if $N_{\hat{G}}(\lfloor v\rfloor)\cap I_w = \emptyset$.

	In its turn, by applying Lemma \ref{l34} to the set $X = S\cap \lfloor w\rfloor \setminus D_{\alpha}$, we can extend $\tilde{c}_w$ to the vertices of $\lfloor X\rfloor$ in order to assume that this coloring is actually defined over $D_{\alpha+1}\cap [w]$ and it is strongly maximal in $\lfloor X_{w}\cup X\rfloor$. Besides that, unless by changing the values on $\tilde{c}_u$ in vertices of $I_u$ and finitely many vertices of finite degree in $\lfloor u\rfloor$, Corollary \ref{c22} allows us to assume that every vertex of $I_w$ has infinitely many neighbors of opposite color in $\lfloor w\rfloor$.

	 Thus, $c_{\alpha}':=c_{\alpha}|_{D_{\alpha}\setminus [w]}\cup \tilde{c}_w$ is an extension of $c_{\alpha}|_{D_{\alpha}\setminus (I_w\cup \lfloor w\rfloor)}$, which we can now regard as a stable coloring whose domain can be decomposed as $(H_{\alpha}\setminus I_w)\cup \lfloor X_{\alpha+1}\setminus \{w\} \rfloor$. Applying Lemma \ref{fixing} to this setting, we can choose $c_{\alpha+1}$ as a strongly maximal coloring in $\lfloor X_{\alpha+1}\setminus \{w\}\rfloor$ such that $c_{\alpha+1}\triangle c_{\alpha}'\subseteq \lfloor X_{\alpha+1}\rfloor$ and $(c_{\alpha+1}\triangle c_{\alpha}')\cap \lfloor v\rfloor$ is finite for every $v\in X_{\alpha+1}\setminus \{w\}$. Once $c_{\alpha}'$ is strongly maximal in $\lfloor w\rfloor$ by construction, it follows that $c_{\alpha+1}$ is actually strongly maximal in $\lfloor X_{\alpha+1}\rfloor$, as required by \ref{ps2}. In addition, $c_{\alpha+1}$ verifies \ref{ps4} by definition of $I_w$, so that this coloring is indeed a stable one. Finally, note that $c_{\alpha}$ and $c_{\alpha+1}$ agree on the vertices of infinite degree belonging to $D_{\alpha}$.
\end{proof}

\begin{case}\label{case2}
	Suppose that there is a vertex $w\in H\setminus D_{\alpha}$ that contains infinitely many neighbors in $H\cap D_{\alpha}$. Then, $c_{\alpha+1}$ and $D_{\alpha+1}$ are chosen so that $X_{\alpha+1}:=X_{\alpha}$ and $D_{\alpha+1}:=D_{\alpha}\cup \{w\}$. Moreover, the set $\{u\in N(w)\cap H\cap D_{\alpha}: c_{\alpha+1}(w)\neq c_{\alpha+1}(u)\}$ is infinite.
\end{case}
\begin{proof}[Construction of $c_{\alpha+1}$]
	Since $N(w)\cap H\cap D_{\alpha}$ is infinite, there is $i\in 2$ for which $|N(w)\cap H\cap c_{\alpha}^{-1}(i)| = \aleph_0$, so that we define $c_{\alpha}'$ to be the extension of $c_{\alpha}$ to $D_{\alpha+1}:=D_{\alpha}\cup \{w\}$ given by $c_{\alpha}'(w) = 1-i$. Since $c_{\alpha}$ is a stable coloring, Lemma \ref{fixing} claims the existence of a strongly maximal coloring $c_{\alpha'}: D_{\alpha+1}\to 2$ such that $c_{\alpha+1}\triangle c_{\alpha}'\subseteq \lfloor X_{\alpha}\rfloor$ and $(c_{\alpha}\triangle c_{\alpha}')\cap \lfloor v\rfloor$ is a finite set of vertices of finite degree for every $v\in X_{\alpha}$. In addition, this latter intersection is empty if $w$ does not contains a neighbor in $\lfloor v \rfloor$. Then, \ref{ps2} and \ref{ps4} are verified by $c_{\alpha+1}$ immediately, because they hold for $c_{\alpha}$ by induction. Finally, we remark that $c_{\alpha}$ and $c_{\alpha+1}$ agree on the vertices of infinite degree belonging to $D_{\alpha}$.
\end{proof}

\begin{case}\label{case4}
	Considering the tree order of $T$, suppose that there is $w\in H\setminus D_{\alpha}$ for which $\lfloor u\rfloor \subseteq D_{\alpha}$ and $|N(w)\cap \lfloor u\rfloor| = \aleph_0$ for some successor $u\in S(w)$. Then, $c_{\alpha+1}$ and $D_{\alpha+1}$ are chosen so that $X_{\alpha+1}:=(X_{\alpha}\setminus \lfloor u\rfloor)\cup\{u\}$ and $H_{\alpha+1}:=H_{\alpha}\cup I_w$, where $I_w = \{w'\in H\setminus D_{\alpha}: |N(w')\cap \lfloor u\rfloor|=\aleph_0\}$. Moreover, $\{u'\in N(w')\cap \lfloor u\rfloor : c_{\alpha+1}(u')\neq c_{\alpha+1}(w')\}$ is infinite for every $w'\in I_w$ (and, in particular, for $w'=w$).
\end{case}
\begin{proof}[Construction of $c_{\alpha+1}$]
	The definition of $c_{\alpha+1}$ is similar to the one provided by Case \ref{case5}. When restricted to $[u]\cap D_{\alpha}$, we first observe that $c_{\alpha}$ gives rise to a stable coloring $c_u:=c_{\alpha}|_{D_{\alpha}\cap [u]}$, where \ref{ps1} is verified by the sets $H_u:=H_{\alpha}\cap [u]$ and $X_u:=X_{\alpha}\cap \lfloor u\rfloor$. Then, after fixing any extension $c_u'$ of $c_u$ to $(D\cap [u])\cup I_w$, Lemma \ref{fixing} provides a coloring $\tilde{c}_u: D_{\alpha}\cap[u]\cup I_w \to 2$ which is strongly maximal in $\lfloor X_u\rfloor$, verifies $\tilde{c}_u\triangle c_u'\subseteq \lfloor X_u\rfloor$ and such that $(\tilde{c}_u\triangle c_u')\cap \lfloor v\rfloor$ is a finite set of vertices of finite degree for every $v\in X_u$. Moreover, we can assume that this latter intersection is empty if $N_{\hat{G}}(\lfloor v\rfloor)\cap I_w = \emptyset$. We even remark that $\tilde{c}_u$ is strongly maximal in $\lfloor u\rfloor$, since the vertices of finite degree in this set actually lie on $\lfloor X_u\rfloor$. In its turn, unless by changing the values on $\tilde{c}_u$ in vertices of $I_w$ and finitely many vertices of finite degree in $\lfloor u\rfloor$, Corollary \ref{c22} allows us to assume that every vertex of $I_w$ has infinitely many neighbors of opposite color in $\lfloor u\rfloor$.

	Thus, $c_{\alpha}':=c_{\alpha}|_{D_{\alpha}\setminus [u]}\cup \tilde{c}_u$ is an extension of $c_{\alpha}|_{D_{\alpha}\setminus (I_w\cup \lfloor u\rfloor)}$, which we can now regard as a stable coloring whose domain can be decomposed as $(H_{\alpha}\setminus I_w)\cup \lfloor X_{\alpha+1}\setminus \{u\} \rfloor$. Applying Lemma \ref{fixing} to this setting, we can choose $c_{\alpha+1}$ as a strongly maximal coloring in $\lfloor X_{\alpha+1}\setminus \{u\}\rfloor$ such that $c_{\alpha+1}\triangle c_{\alpha}'\subseteq \lfloor X_{\alpha+1}\rfloor$ and $(c_{\alpha+1}\triangle c_{\alpha}')\cap \lfloor v\rfloor$ is finite for every $v\in X_{\alpha+1}\setminus \{u\}$. Once $c_{\alpha}'$ is strongly maximal in $\lfloor u\rfloor$ by construction, it follows that $c_{\alpha+1}$ is actually strongly maximal in $\lfloor X_{\alpha+1}\rfloor$, as required by \ref{ps2}. In addition, $c_{\alpha+1}$ verifies \ref{ps4} by definition of $I_u$, so that this coloring is indeed a stable one. Finally, note that $c_{\alpha}$ and $c_{\alpha+1}$ agree on the vertices of infinite degree belonging to $D_{\alpha}$.
\end{proof}

In its turn, suppose now that $\alpha$ is a limit ordinal and that $c_{\beta}$ is defined for every $\beta < \alpha$. The domain of $c_{\alpha}$ will be given by $D_{\alpha}:=\bigcup_{\beta < \alpha}D_{\beta}$, which we can decompose as the union between $H_{\alpha}:=D_{\alpha}\setminus \lfloor X_{\alpha}\rfloor \subseteq H$ and $\lfloor X_{\alpha}\rfloor$, where $X_{\alpha}$ comprises the $\leq-$minimal vertices of $ \bigcup_{\beta < \alpha}X_{\beta}$. For a vertex $w\in H_{\alpha}$, we fix the ordinal $\beta_w=\min\{\beta < \alpha : w\in D_{\beta}\}$ and define $c_{\alpha}(w) :=c_{\beta_w}(w)$, observing by induction that $c_{\beta_w}(w) = c_{\beta}(w)$ for every $\beta_w \leq \beta <\alpha$. Similarly, we fix the ordinal $\beta_v=\min\{\beta < \alpha : v\in X_{\beta}, N_{\hat{G}}(\lfloor v\rfloor)\cap D_{\alpha} = N_{\hat{G}}(\lfloor v\rfloor)\cap D_{\beta}\}$ for every $v\in X_{\alpha}$, which exists since $N_{\hat{G}}(\lfloor v\rfloor)\subseteq \lceil v\rceil$ is finite. We note that $v \in X_{\beta}$ for every $\beta_v \leq \beta < \alpha$ by the $\leq-$minimality of $v$, especially when applied to the definition of $X_{\beta+1}$ as in Cases \ref{case5} and \ref{case4}. Moreover, since $N_{\hat{G}}(\lfloor v\rfloor)\cap D_{\alpha} = N_{\hat{G}}(\lfloor v\rfloor)\cap D_{\beta_v}$, it follows that $c_{\beta}|_{\lfloor v\rfloor} = c_{\beta_v}|_{\lfloor v\rfloor}$ for every $\beta_v\leq \beta < \alpha$. Due to this reason, we define $c_{\alpha}|_{\lfloor v\rfloor}:=c_{\beta_v}|_{\lfloor v\rfloor}$. In particular, $c_{\alpha}$ arises from this definition as a strongly maximal coloring in $\lfloor X_{\alpha}\rfloor$. Analogously, $c_{\alpha}$ verifies \ref{ps4} because so it does $c_{\beta_v}$ for every $v\in X_{\alpha}$.

Since $G$ is countable, there is an ordinal $\Omega<\omega_1$ such that $D_{\Omega+1}= D_{\Omega}$, concluding that the recursive process which defines the sequence $\{c_{\alpha}\}_{\alpha \leq \Omega}$ eventually halts. Our final goal is to show that $c_{\Omega}$ holds as the coloring claimed by Theorem \ref{EnunciadoTecnico}, which first requires that this bipartition is globally defined. When assuming that $D_{\Omega}\neq V(\hat{G})$, the following observation is the core of a contradiction to be reached:

\begin{lemma}\label{l36}
	Suppose that $v\in H$ is a vertex such that $\lfloor v\rfloor\setminus D_{\Omega}\neq \emptyset$. Then, there is a vertex $w\in H\setminus D_{\Omega}$ such that $w>v$, $S(w)$ is finite and $w$ contains only finitely many neighbors in $H$.
\end{lemma}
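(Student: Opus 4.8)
I would prove this by transfinite induction on $\mathrm{rank}_T(v)$, using throughout that the construction of $\{c_\alpha\}_{\alpha\le\Omega}$ has stabilised, so that for every $x\in H\setminus D_\Omega$ the hypotheses of Cases \ref{case3}, \ref{case1}, \ref{case5}, \ref{case2} and \ref{case4} all fail at stage $\Omega$: $x$ has only finitely many successors in $S\setminus D_\Omega$, the set $N(x)\cap\lfloor s\rfloor$ is finite for every $s\in S\setminus D_\Omega$, the set $N(x)\cap\lfloor u\rfloor$ is finite for every $u\in S(x)$ with $\lfloor u\rfloor\subseteq D_\Omega$, the set $N(x)\cap H\cap D_\Omega$ is finite, and $\lfloor x\rfloor\setminus\{x\}\not\subseteq D_\Omega$. (In particular no vertex of $H\setminus D_\Omega$ has infinitely many neighbours inside a cone $\lfloor s\rfloor$, $s\in S$, that is not already contained in $D_\Omega$.)

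The base case $\mathrm{rank}_T(v)=0$ I expect to be vacuous. As $v$ has infinite degree, $\lfloor v\rfloor$ cannot consist only of vertices of finite degree, hence it consists only of vertices of infinite degree; then for every $x\in\lfloor v\rfloor$ the finite chain $\lceil x\rceil$ contains no vertex of $S$, so that $\lfloor v\rfloor\subseteq H$, and normality of $T$ makes $N(x)\cap\lfloor x\rfloor$ cofinite in the infinite set $N(x)$. Thus every vertex of $\lfloor v\rfloor$ has infinitely many neighbours inside $\lfloor v\rfloor$, so $\lfloor v\rfloor$ is an $\aleph_0$-regular subset of $H$; by maximality of $H_0$ this forces $\lfloor v\rfloor\subseteq H_0\subseteq D_0\subseteq D_\Omega$, contradicting $\lfloor v\rfloor\setminus D_\Omega\neq\emptyset$.

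For the inductive step I would set $\mathrm{rank}_T(v)=\alpha>0$ and assume the claim for every vertex of $H$ of $T$-rank below $\alpha$. The idea is to descend along $T$ inside $H$ following the uncoloured vertices. Using the failure of Case \ref{case5} at $v$ (when $v\notin D_\Omega$), or the hypothesis directly (when $v\in D_\Omega$), one obtains a successor $u\in S(v)$ with $\lfloor u\rfloor\setminus D_\Omega\neq\emptyset$. If $u$ has finite degree, then $u\in S\setminus D_\Omega$, the graph $B_u$ has $\mathrm{rank}(B_u)\le\mathrm{rank}_T(u)<\alpha$ (the rank drops because $u$ has finite degree while $v$ does not), the failures of Cases \ref{case3} and \ref{case1} show that only finitely many such cones hang from $v$ and each meets $v$ in finitely many edges, and a subsidiary use of Theorem \ref{EnunciadoTecnico} and Lemma \ref{l34} at the lower rank inside $B_u$ supplies the required vertex. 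If $u$ has infinite degree, then $u\in H$ and $\mathrm{rank}_T(u)\le\alpha$: when $\mathrm{rank}_T(u)<\alpha$ the inductive hypothesis applied to $u$ yields $w>u>v$ with the desired properties, and when $\mathrm{rank}_T(u)=\alpha$ I replace $v$ by $u$ and repeat. At the vertex $w$ where this descent halts, every infinite-degree successor of $w$ has its cone inside $D_\Omega$, so the failures of Cases \ref{case4}, \ref{case1}, \ref{case3} and \ref{case2}, together with the finiteness of $\lceil w\rceil$, force both $|S(w)|<\infty$ and $|N(w)\cap H|<\infty$; the failure of Case \ref{case5} gives $w\notin D_\Omega$, and $w>v$ because at least one step of the descent was taken.

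The main obstacle is precisely the termination of the descent through infinite-degree successors of unchanged $T$-rank $\alpha$, and this is where the hypothesis on $G$ must enter. If the descent did not terminate, the successors chosen along the way would span a ray $R$ of $T$ lying entirely in $H$; since each of its vertices has $T$-rank $\alpha>0$, each cone encountered along $R$ still contains vertices of $S$ whose $T$-ranks are cofinal in $\alpha$, hence there are vertices of finite degree arbitrarily far up along $R$, and threading infinitely many of them together with initial segments of $R$ should produce a single ray of $G$ meeting infinitely many vertices of finite degree and infinitely many of infinite degree — an alternating ray, contradicting the choice of $G$. Making this splicing rigorous, and ruling out the degenerate configuration in which all uncoloured vertices above $v$ sit below finite-degree successors (which would have to be reabsorbed by Case \ref{case3} or Case \ref{case1}), is the delicate part I expect to require the most care.
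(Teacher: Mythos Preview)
Your outline takes a genuinely different route from the paper, and the route has real gaps that the paper's direct argument avoids.

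The paper does \emph{not} argue by induction on $\mathrm{rank}_T(v)$ and does \emph{not} invoke the no-alternating-ray hypothesis inside this lemma at all; that hypothesis is used only afterwards, when Lemma~\ref{l36} is applied repeatedly to build the chain $w_0<w_1<\ldots$. Instead, the paper observes that the maximality of $H_0$ works globally, not just in your rank-$0$ base case: if \emph{every} vertex of $H\setminus(D_\Omega\cup\lceil v\rceil)$ had infinitely many neighbours in $H\setminus D_\Omega$, that set would itself be an $\aleph_0$-regular subset of $H$, hence contained in $H_0\subseteq D_0\subseteq D_\Omega$ --- a contradiction. This single stroke produces $w>v$ in $H\setminus D_\Omega$ with $|N(w)\cap(H\setminus D_\Omega)|<\infty$, and then failure of Case~\ref{case2} gives $|N(w)\cap H\cap D_\Omega|<\infty$. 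Your descent machinery, and the termination problem you flag as the ``main obstacle'', simply do not arise.

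There is also a second ingredient you are missing. To get $|S(w)|<\infty$ one needs $|S(w)\cap S\cap D_\Omega|<\infty$, and this does \emph{not} follow from the mere failure of Cases~\ref{case3}--\ref{case4} at stage $\Omega$, contrary to what your sketch asserts. The paper proves it by an ordinal argument: if $X:=S(w)\cap S\cap D_\Omega$ were infinite, look at the least $\gamma$ with $|X\cap X_\gamma|=\aleph_0$; a successor $\gamma$ would force Case~\ref{case3} to have put $w$ into $D_\gamma$, while a limit $\gamma$ yields a strictly increasing sequence $\gamma_n$ at each of which Case~\ref{case1} must have added a \emph{new} vertex of $\lceil w\rceil$ to $D_{\gamma_n}$ --- impossible since $\lceil w\rceil$ is finite. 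Your proposal has no analogue of this step.

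Finally, two local problems with the sketch itself: once a successor $u$ lies in $S$, the cone $\lfloor u\rfloor$ is disjoint from $H$, so no vertex $w\in H$ with $w>v$ can be found there --- invoking Theorem~\ref{EnunciadoTecnico} or Lemma~\ref{l34} (which construct colourings, not vertices) does not help. And your description of the ``halting'' vertex does not match your descent rule: the descent stops when the rank drops or the successor has finite degree, which is not the same as ``every infinite-degree successor has its cone inside $D_\Omega$''.
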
 
\begin{proof}
    We first observe that $S(w)$ is finite for every $w\in H\setminus D_{\Omega}$. Otherwise, Case \ref{case3} provides a coloring $c_{\Omega+1}$ whose domain contains $w$, contradicting the choice of $\Omega$.

	As a less immediate remark, we now claim that a given vertex $w\in H\setminus D_{\Omega}$ also contains only finitely many successors in $S\cap D_{\Omega}$. For a moment, suppose that these successors define an infinite subset $X\subseteq S\cap D_{\Omega}$, so that $X\subseteq X_{\Omega}$ by \ref{ps1} of stable colorings. In particular, the ordinal $\gamma :=\min\{\eta < \Omega : |X\cap X_{\eta}|=\aleph_0\}$ is well defined. We observe that $\gamma$ cannot be written as $\gamma = \alpha+1$ for some ordinal $\alpha$. Otherwise, Case \ref{case3} describes the only possible way to obtain $c_{\gamma}$ from $c_{\alpha}$ so that $X_{\gamma}\setminus X_{\alpha}$ is infinite, but this also implies that $w\in D_{\gamma}\subseteq D_{\Omega}$. Therefore, $\gamma$ must be a limit ordinal and we may find an increasing sequence of successor ordinals $\{\gamma_n\}_{n\in\mathbb{N}}\subseteq \gamma$ such that $X\cap X_{\gamma_n}\subsetneq X\cap X_{\gamma_{n+1}}$ for every $n\in\mathbb{N}$. Actually, we can describe this sequence recursively by setting $\gamma_{n+1}:=\min\{\eta < \gamma : X\cap X_{\gamma_n}\subsetneq X\cap X_{\eta}\}$, so that $c_{\gamma_{n+1}}$ is obtained from a previously defined coloring according to Case \ref{case1}, because this is the only construction which allows $X_{\gamma_{n+1}}$ to contain a vertex from $X\setminus X_{\gamma_n}$ without implying in $w\in D_{\gamma_n}\subseteq D_{\Omega}$.  However, the normality of $T$ now requires that $\lceil w\rceil \cap D_{\gamma_{n+1}}\supsetneq \lceil w \rceil \cap D_{\gamma_n}$, since Case \ref{case1} applies only if there is a vertex in $D_{\gamma_{n+1}}\setminus D_{\gamma_n}$ which contains infinitely many neighbors in $\lfloor (X_{\gamma_{n+1}}\cap X)\setminus X_{\gamma_n}\rfloor$. Hence, we hit the contradiction that $\{\lceil w\rceil \cap D_{\gamma_n}\}_{n\in\mathbb{N}}$ should be an infinite strict $\subseteq-$increasing sequence of subsets of the finite set $\lceil w\rceil$.

		In its turn, we observe that there is indeed a vertex $w\in H\setminus D_{\Omega}$ which is strictly greater than $v$ in the tree order of $T$. If not, then $(\lfloor v\rfloor \setminus \{v\})\cap H\subseteq D_{\Omega}$, from where Case \ref{case5} can be used to describe a coloring $c_{\Omega+1}$ which contains $\lfloor v\rfloor$ in its domain, contradicting the choice of $\Omega$.

	Finally, we will now argue that there is a vertex $w\in H\setminus (D_{\Omega}\cup \lceil v\rceil)$ which has only finitely many neighbors in $H$. For instance, suppose that every vertex $w\in H\setminus (D_{\Omega}\cap \lceil v\rceil)$ also contains infinitely many neighbors in $H\setminus D_{\Omega}$. Then, $\{w \in H\setminus (D_{\Omega}\cup \lceil v\rceil): |N(w)\cap H\setminus (D_{\Omega}\cup \lceil v\rceil)|=\aleph_0\}$ is a non empty $\aleph_0$-regular subset of $H$ by its own definition and since $\lceil v\rceil$ is finite. However, giving rise to a contradiction, this subset should be contained in $D_0\subseteq D_{\Omega}$ due to the definition of the coloring $c_0$. Therefore, there is indeed a vertex $w\in H\setminus D_{\Omega}$ which is greater than $v$ in the tree order of $T$ and such that $N_{\hat{G}}(w)\cap H\setminus D_{\Omega}$ is finite. Finishing the proof, we note that $|N_{\hat{G}}(w)\cap H\cap D_{\Omega}|<\aleph_0$ as well: otherwise, Case \ref{case2} could be applied to describe a coloring $c_{\Omega+1}$ that contains $w$ in its domain, contradicting the choice of $\Omega$ once more.

\end{proof}

If we assume for a contradiction that $D_{\Omega}\neq V(\hat{G})$, then the root $r$ of $T$ is a vertex of $H$ that verifies $\lfloor r\rfloor \setminus D_{\Omega}\neq \emptyset$. Then, the above lemma claims that there is $w_0 \in H\setminus (D_{\Omega}\cup \lceil v\rceil)$ which contains only finitely many successors in $T$ and only finitely many neighbors in $H$. In particular, there is a successor $u_0\in S(w_0)$ such that $N(w_0)\cap \lfloor u_0\rfloor$ is infinite, since $w_0$ has infinite degree and $T$ is a normal tree. We note that $u_0\notin S$: otherwise, Case \ref{case1} provides a coloring $c_{\Omega+1}$ that contains $w_0$ in its domain, contradicting the choice of $\Omega$. Then, $u_0$ belongs to $H$ and $\lfloor u_0\rfloor \setminus D_{\Omega}\neq \emptyset$, where this last statement follows from the fact that Case \ref{case4} cannot be applied to define a coloring $c_{\Omega+1}$ that contains $w_0$ in its domain.

Hence, by induction over $n\in\mathbb{N}$, suppose that have so far defined a vertex $w_n \in H$ and a successor $u_n \in S(w_n)\cap H$ so that $N(w_n)\cap \lfloor u_n\rfloor$ is infinite, $w_n$ has finitely many neighbors in $H$ and $\lfloor u_n\rfloor \setminus D_{\Omega}\neq \emptyset$. Then, Lemma \ref{l36} claims that there is a vertex $w_{n+1} > u_n$ which belongs to $H\setminus D_{\Omega}$, it has finitely many neighbors in $H$ and such that $S(w_n)$ is finite. However, $w_{n+1}$ has infinite degree in $\hat{G}$, meaning that there is a successor $u_{n+1}\in S(w_{n+1})$ for which $|N(w_{n+1})\cap \lfloor u_{n+1}\rfloor|=\aleph_0$. As before, we note that $u_{n+1}\notin S$ since, otherwise, Case \ref{case1} could be applied to describe a coloring $c_{\Omega+1}$ that contains $w_{n+1}$ in its domain. The same conclusion would be reached by Case \ref{case4} if $\lfloor u_{n+1}\rfloor \subseteq D_{\Omega}$. In other words, $u_{n+1}\in H$ and $\lfloor u_{n+1}\rfloor \setminus D_{\Omega}\neq \emptyset$.

Once $\{w_n\}_{n\in\mathbb{N}}$ and $\{u_n\}_{n\in\mathbb{N}}$ are defined, we note there is a vertex $x_n\in N(w_n)\cap \lfloor u_n\rfloor \cap \lfloor S\rfloor$ for every $n\in\mathbb{N}$, since $N(w_n)\cap \lfloor u_n\rfloor$ is infinite but $w_n$ contains only finitely many neighbors in $H$. Unless by passing $\{w_n\}_{n\in\mathbb{N}}$ and $\{u_n\}_{n\in\mathbb{N}}$ to suitable subsequences, we may assume that $x_n$ and $w_{n+1}$ are incomparable in the tree order of $T$ for every $n\in\mathbb{N}$. After all, $\lceil x_n\rceil$ is finite and, thus, we can consider $w_{n+1}$ as the $\leq-$minimal vertex of $\{w_i\}_{i\in\mathbb{N}}$ in $G\setminus \lceil x_n\rceil$. Under this assumption, we have that the paths $x_nTw_{n+1}$ and $x_mTw_{m+1}$ are disjoint for any two natural numbers $n < m$, because the latter lies on $\lfloor u_{m}\rfloor$ and the former is contained in $\lfloor u_n\rfloor \setminus \lfloor u_{n+1}\rfloor\subseteq \lfloor u_n\rfloor \setminus \lfloor u_m\rfloor$. However, both paths contains vertices of $S$, since $x_n,x_m\in \lfloor S\rfloor$. Once $S$ comprises only vertice of finite degree by its definition, the ray described by the concatenation $w_0x_0Tw_1x_1Tw_2x_2Tw_3x_3\dots$ is thus alternating, which contradicts the main hypothesis over $G$ as fixed in the beginning of this section.

Therefore, $c_{\Omega}$ is globally defined and, as a stable coloring, it is strongly maximal in $\lfloor X_{\Omega}\rfloor$. Observing that this set contains all the vertices of finite degree in $G$, it follows that $c_{\Omega}$ is actually strongly maximal over its domain. Hence, the observation below finishes the proof of Theorem \ref{EnunciadoTecnico}:

\begin{prop}
	$c_{\Omega}$ is unfriendly in the vertices of infinite degree of $G$.
\end{prop}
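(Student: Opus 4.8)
The plan is to reduce the statement to a single combinatorial persistence claim about the recursive construction. Since $G$ is countable, every vertex has countable degree, so a colouring is unfriendly at a vertex $w$ of infinite degree precisely when $w$ has infinitely many neighbours of the opposite colour (otherwise infinitely many neighbours would agree with $c_\Omega(w)$, contradicting the defining inequality). Hence I would prove that every vertex $w$ of infinite degree of $G$ satisfies $|\{u\in N(w):c_\Omega(u)\neq c_\Omega(w)\}|=\aleph_0$. Throughout I would use the invariant, checked in each Case and preserved at limit stages, that $c_\gamma$ and $c_\beta$ agree on the vertices of infinite degree of $D_\beta$ whenever $\beta\le\gamma\le\Omega$; in particular, the colour of a vertex of infinite degree is never changed once assigned. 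The vertices of infinite degree of $G$ are partitioned into those in $H$ and those in $\lfloor S\rfloor$, and I would treat these two groups separately.

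For $w$ of infinite degree with $w\in\lfloor S\rfloor$, let $v$ be the $\le$-minimal element of $S$ below $w$; since $v$ has finite degree we have $w>v$, so $w\in\lfloor u\rfloor=V(B_u)$ for some successor $u\in S(v)$. At the step in which $\lfloor v\rfloor$ first enters the domain --- which always occurs through an application of Lemma \ref{l34} inside Case \ref{case1}, \ref{case3} or \ref{case5} --- the block $B_u$ is coloured by a colouring $c_u$ supplied by the inductive hypothesis, i.e. Theorem \ref{EnunciadoTecnico} applied to $B_u$ (legitimate because $\mathrm{rank}(B_u)<\mathrm{rank}(G)$); hence $c_u$ is unfriendly at every vertex of infinite degree of $B_u$, so $w$ has $\aleph_0$ neighbours in $B_u$ disagreeing with $c_u(w)$. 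Granting the persistence claim below, $c_\Omega|_{\lfloor u\rfloor}$ differs from $c_u|_{\lfloor u\rfloor}$ in only finitely many vertices of finite degree; since $w$'s colour is frozen, $w$ still has $\aleph_0$ opposite-coloured neighbours in $B_u$ under $c_\Omega$, and as $N_G(w)\setminus B_u\subseteq\lceil w\rceil$ is finite this is unfriendliness at $w$ in $G$. For $w$ of infinite degree with $w\in H$: if $w\in H_0$ one argues directly, using that $c_0$ restricts to an unfriendly partition of $G[H_0]$ and that $N(w)\cap H_0$ is infinite, that $w$ already has $\aleph_0$ opposite-coloured neighbours under $c_0$, which the invariant carries to $c_\Omega$; otherwise $w$ first enters the domain at some successor stage $\alpha+1$ through one of Cases \ref{case3}, \ref{case1}, \ref{case5}, \ref{case4} or \ref{case2}, each of which explicitly grants $w$ infinitely many neighbours disagreeing with $c_{\alpha+1}(w)$ --- either inside $H\cap D_\alpha$ in Case \ref{case2} (those neighbours have infinite degree, so they and $w$ are frozen and nothing more is needed), or inside a fixed up-closed set $T_w\subseteq D_{\alpha+1}$ of the tree order, namely $\lfloor v\rfloor$ in Case \ref{case1}, $\lfloor X\rfloor$ in Case \ref{case3}, $\lfloor w\rfloor$ in Case \ref{case5}, and $\lfloor u\rfloor$ in Case \ref{case4}.

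Thus both cases rest on the persistence claim: once an up-set $\lfloor v\rfloor$ with $v\in S$, or more generally each of the up-sets $T_w$ above, has entered the domain, only finitely many of its vertices of finite degree are ever recoloured afterwards. Granting this, the vertices of infinite degree of the up-set keep their colours, $w$'s colour is frozen, and so $w$ keeps $\aleph_0$ opposite-coloured neighbours, which finishes the proof. I would establish the claim as follows: by the last clause of Lemma \ref{fixing} together with the limit-stage definition of $c_\alpha$ via the ordinals $\beta_v$, the running colouring restricted to $\lfloor v\rfloor$ changes only when the element $v'\in X_\gamma$ currently tracking $\lfloor v\rfloor$ (so that $\lfloor v\rfloor\subseteq\lfloor v'\rfloor$) acquires a new $\hat G$-neighbour, and then only finitely many vertices of finite degree of $\lfloor v'\rfloor$ are recoloured; by the normality of $T$ every potential new neighbour of such a $\lfloor v'\rfloor$ lies in the finite set $\lceil v'\rceil$, and by Cases \ref{case5} and \ref{case4} the successive tracking elements of $\lfloor v\rfloor$ form a strictly descending --- hence finite --- chain in $T$; summing over these finitely many trackers and their finitely many new neighbours gives finitely many recolouring events, each affecting finitely many vertices of finite degree.

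The step I expect to be the main obstacle is the persistence claim in the case $T_w=\lfloor X\rfloor$ of Case \ref{case3}, where $X$ is infinite: there the per-piece bound must be prevented from accumulating over the infinitely many pieces $\lfloor x\rfloor$, $x\in X$, into infinitely many lost opposite-coloured neighbours of $w$. This is exactly what the defining property of $X$ --- that every $w'\le w$ has either no neighbour or $\aleph_0$ neighbours in $\lfloor X\rfloor$ --- is designed to achieve: while $x$ itself is the tracking element, $\lfloor x\rfloor$ acquires no new $\hat G$-neighbour after stage $\alpha+1$, since any such neighbour would have $\aleph_0$ neighbours in $\lfloor X\rfloor$ and hence already belong to $D_{\alpha+1}=H_\alpha\cup\lfloor X\rfloor\cup N_{\hat G}(\lfloor X\rfloor)$; and once $\lfloor x\rfloor$ is absorbed (via Case \ref{case5} or \ref{case4}) it is absorbed into some $\lfloor b\rfloor$ with $b\in H\cap\lceil w\rceil$, of which there are only finitely many, each accounting for finitely many recolourings of vertices of finite degree. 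Verifying this carefully across all five cases is the only genuinely delicate point; the remainder is routine bookkeeping with the stated properties of the construction, and it completes the proof of Theorem \ref{EnunciadoTecnico} and hence of Theorem \ref{main}.
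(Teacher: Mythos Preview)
Your proposal is correct and follows essentially the same strategy as the paper: every vertex $w$ of infinite degree acquires $\aleph_0$ neighbours of the opposite colour at the stage it first enters the domain, and persistence of this property is secured because all later changes affect only finitely many vertices of finite degree.

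The main organisational difference lies in how persistence is proved. For the Cases~\ref{case1}, \ref{case5} and~\ref{case4} the paper does not track a descending chain of ``current'' elements $v'\in X_\gamma$; instead it picks the single stage $\beta=\min\{\eta\ge\alpha:\lceil v\rceil\subseteq D_\eta\}$ and observes that, once all of the finite set $\lceil v\rceil\supseteq N_{\hat G}(\lfloor v\rfloor)$ is coloured, the piece $\lfloor v\rfloor$ can neither gain a new neighbour nor be absorbed (any absorbing $w'$ would lie in $\lceil v\rceil\subseteq D_\beta$), so $c_\Omega|_{[v]}=c_\beta|_{[v]}$ exactly; the finitely many changes between $c_\alpha$ and $c_\beta$ then suffice. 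This is a cleaner packaging of the same idea as your tracker argument. Conversely, your explicit treatment of infinite-degree $w\in\lfloor S\rfloor$ --- invoking the inductive unfriendliness of Theorem~\ref{EnunciadoTecnico} inside the blocks $B_u$ --- is more transparent than the paper's, which leaves it implicit in the Case analysis. One small imprecision in your sketch: recolourings inside a tracked piece are not triggered solely by new neighbours of $\lfloor v'\rfloor$; the Corollary~\ref{c22} adjustments made during an absorption step (Cases~\ref{case5} and~\ref{case4}) can also touch $\lfloor v'\rfloor$. This does not affect your conclusion, since such absorption events are themselves finite in number.
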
   
\begin{proof}
	Let $w\in V(G)$ be a vertex of infinite degree. If $w\in D_0$, then $c_{\Omega}$ is unfriendly in this vertex because do does $c_0$ and since $c_{\Omega}|_{D_0}=c_0$ by construction. If $w\notin D_0$, fix $\alpha:=\min\{\eta < \Omega : w \in D_{\eta}\}$, observing that this is a successor ordinal due to the definition of $D_{\alpha}$ as $\bigcup_{\beta < \alpha}D_{\beta}$ if $\alpha$ were a limit one. Therefore, $c_{\alpha+1}$ is constructed according to one of the Cases \ref{case3}-\ref{case4}.

	If $c_{\alpha}$ is described via Case \ref{case3}, there is $X\subseteq S$ such that $w \in \lfloor X\rfloor N_{\hat{G}}(\lfloor X\rfloor)\subseteq D_{\alpha}$. In addition, $|\{u \in N_{\hat{G}}(w)\cap \lfloor X\rfloor : c_{\alpha}(u)\neq c_{\alpha}(w)\}| = \aleph_0$. Observing that $c_{\Omega}$ agrees with $c_{\alpha}$ on $N_{\hat{G}}(\lfloor X\rfloor)\cup \lfloor X\rfloor$, once $N_{\hat{G}}(\lfloor X\rfloor)$ is contained in this set and comprises only vertices of infinite degree, it follows that $c_{\Omega}$ is unfriendly in $w$. The same conclusion holds if $c_{\alpha}$ is defined according to Case \ref{case2}: after all, the maps $\{c_{\beta}\}_{\beta \leq \alpha}$ assign the same colors to the vertices of $H$ in which they are defined.

	Finally, if $c_{\alpha}$ is described by one of other three cases (\ref{case1}, \ref{case5} or \ref{case4}), there is a vertex $v\in X_{\alpha}$ satisfying $|\{u\in N_{\hat{G}}(w)\cap \lfloor v\rfloor : c_{\alpha}(u)\neq c_{\alpha}(w)\}|=\aleph_0$. Fixing the ordinal $\beta = \min \{\alpha \leq \eta < \Omega : \lceil v\rceil \subseteq D_{\eta}\}$, it follows that $c_{\beta}|_{[v]}=c_{\Omega}|_{[v]}$, because the vertices of finite degree in $[v]$ and their neighborhoods are contained in $D_{\beta}$. On the other hand, $(c_{\beta}\triangle c_{\alpha})\cap [v]\subseteq \lfloor v\rfloor$ is a finite set of vertices of finite degree, since $\lceil v \rceil$ is finite and it contains $N_{\hat{G}}(\lfloor v\rfloor)$ by the normality of $T$. Therefore, $c_{\Omega}$ is unfriendly in $w$ due to the fact that $\{u\in N_{\hat{G}}(w)\cap \lfloor v\rfloor : c_{\Omega}(u)\neq c_{\Omega}(w) = c_{\beta}(w) = c_{\alpha}(w)\}$ is thus still infinite.
\end{proof} 

\section{Acknowledgments}
\paragraph{}

We acknowledge Bowler and Niel for sharing their work in \cite{niel}, where Lemma \ref{niel} and its proof can be found. We also thank Pitz for notifying us of this study regarding the Unfriendly Partition Problem.

\bibliography{RemarksCountableCase}
\bibliographystyle{plain}

\end{document}